\documentclass[a4paper]{amsart}
\usepackage{amsmath,amsthm,amssymb,hyperref,mathrsfs}
\usepackage{aliascnt}
\usepackage{lmodern}
\usepackage[T1]{fontenc}

\usepackage[textsize=footnotesize]{todonotes}

\usepackage{xcolor}
\definecolor{dblue}{rgb}{0,0,0.70}
\hypersetup{
	unicode=true,
	colorlinks=true,
	citecolor=dblue,
	linkcolor=dblue,
	anchorcolor=dblue
}

\makeatletter
\expandafter\g@addto@macro\csname th@plain\endcsname{%
		\thm@notefont{\bfseries}
	}%
\expandafter\g@addto@macro\csname th@remark\endcsname{%
		\thm@headfont{\bfseries}
	}%
\makeatother


\newtheorem{theorem}
{Theorem}[section]	
\newtheorem*{theorem*}{Theorem}

\newaliascnt{lemma}{theorem}
\newtheorem{lemma}[lemma]{Lemma}

\aliascntresetthe{lemma}
\newtheorem*{lemma*}{Lemma}

\newaliascnt{proposition}{theorem}

\aliascntresetthe{proposition}

\newaliascnt{corollary}{theorem}
\newtheorem{corollary}[corollary]{Corollary}
\aliascntresetthe{corollary}

\theoremstyle{remark}

\newaliascnt{remark}{theorem}
\newtheorem{remark}[remark]{Remark}
\aliascntresetthe{remark}
\newaliascnt{question}{theorem}
\newtheorem{question}[question]{Question}
\aliascntresetthe{question}

\newtheorem*{question*}{Question}

\newaliascnt{definition}{theorem}
\newtheorem{definition}[definition]{Definition}
\aliascntresetthe{definition}

\newaliascnt{example}{theorem}

\aliascntresetthe{example}

\renewcommand{\restriction}{\mathbin\upharpoonright}

\newcommand{\axiom}[1]{\mathsf{#1}} 
\newcommand{\ZFC}{\axiom{ZFC}}
\newcommand{\AC}{\axiom{AC}}
\newcommand{\CH}{\axiom{CH}}

\newcommand{\DC}{\axiom{DC}}
\newcommand{\ZF}{\axiom{ZF}}

\newcommand{\GCH}{\axiom{GCH}}

\newcommand{\HS}{\axiom{HS}}

\newcommand{\SVC}{\axiom{SVC}}

\DeclareMathOperator{\sym}{sym}

\DeclareMathOperator{\fix}{fix}

\DeclareMathOperator{\id}{id}
\DeclareMathOperator{\aut}{Aut}
\DeclareMathOperator{\Col}{Col}
\DeclareMathOperator{\Add}{Add}

\newcommand{\forces}{\mathrel{\Vdash}}

\newcommand{\incompatible}{\mathrel{\bot}}
\newcommand\PP{\mathbb{P}}

\newcommand{\QQ}{\mathbb{Q}}
\newcommand{\RR}{\mathbb{R}}

\newcommand{\sF}{\mathscr F}

\newcommand{\sG}{\mathscr G}

\newcommand{\cL}{\mathcal L}

\newcommand{\tup}[1]{\langle#1\rangle}

\author{Asaf Karagila}
\thanks{The author was supported by the Royal Society grant no.~NF170989.}
\email[Asaf Karagila]{karagila@math.huji.ac.il}
\urladdr{http://karagila.org}
\address{School of Mathematics,
University of East Anglia.
Norwich, NR4~7TJ, UK
}
\date{\today}
\subjclass[2010]{Primary 03E25; Secondary 03E55}
\keywords{symmetric extensions, dependent choice, the axiom of choice}

\title{Preserving Dependent Choice}

\begin{document}
\begin{abstract}
We prove some general theorems for preserving Dependent Choice when taking symmetric extensions, some of which are unwritten folklore results. We apply these to various constructions to obtain various simple consistency proofs.
\end{abstract}
\maketitle
\section{Introduction}
Dependent Choice is one of the best known weak versions of the axiom of choice, and perhaps the most natural version of the axiom of choice. Indeed, Dependent Choice---or $\DC$---is sometimes mistaken as countable choice, and while it is strong enough to provide us with the basis of analysis (Baire Category Theorem, well-behaved theory of Borel sets and measure, etc.), it is also consistent with assumptions such as ``all sets of reals are regular'' for many versions of regularity (e.g., Lebsegue measurability). 

Therefore, in many constructions of models without the axiom of choice, it is often desirable to preserve $\DC$. We sometimes have to work quite hard for that, and sometimes it is quite easy to obtain. The purpose of this note is to provide some straightforward conditions which allow for the preservation of $\DC$, as well as its stronger versions $\DC_{<\kappa}$ for some infinite cardinal $\kappa$.

The arguments shown here should be considered folklore, even if no explicit formulation or proof appeared in print at this level of generality until today.\footnote{With the exception of the author putting into print the folklore \autoref{lemma:closure} in \cite{Karagila:2014}.} They were used by different people over the years, even if applied to specific constructions each time.

\subsection*{Acknowledgements}
The author would like to thank Amitayu Banerjee for pointing out the simple solution appearing in \autoref{app:apter}, as well as to Philipp Schlicht for asking him the question that led to \autoref{lemma:proper}.

\section{Preliminaries}
Our notation is mostly standard, and follows Jech for the most part. We use $\DC_\lambda$ to denote the statement ``Every $\lambda$-closed tree has a chain of length $\lambda$ or a maximal element'', for the case $\lambda=\omega$ we just write $\DC$, and if $\lambda$ is a limit cardinal, $\DC_{<\lambda}$ is the abbreviation of $\forall\kappa<\lambda, \DC_\kappa$. Similarly, $\AC_\lambda$ will denote the statement ``Every family of $\lambda$ non-empty sets admits a choice function'' and $\AC_{<\lambda}$ abbreviates $\forall\kappa<\lambda,\AC_\kappa$ (we will not use $\AC$ to mean $\AC_\omega$, though, as $\AC$ denotes the axiom of choice).

Much has been written on $\DC$, for example, if $\lambda$ is singular then $\DC_{<\lambda}$ implies $\DC_\lambda$, see \cite[Chapter~8]{Jech:AC1973} for details and more.
\begin{theorem}[Folklore]
For a regular cardinal $\lambda$, $\DC_\lambda$ holds if and only if every $\lambda^+$-closed forcing is $\lambda^+$-distributive.
\end{theorem}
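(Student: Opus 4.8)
The plan is to prove both directions by arguing directly with dense sets: over $\ZF$ we cannot pass to a generic filter, and the only feature of a $\lambda^+$-distributive forcing we shall invoke is that the intersection of $\le\lambda$ dense open subsets is again dense (one of the standard equivalents).

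\emph{From $\DC_\lambda$ to distributivity.} Fix a $\lambda^+$-closed forcing $\PP$, a condition $p$, and dense open sets $\langle D_\alpha : \alpha<\lambda\rangle$; I want a condition $q\le p$ in $\bigcap_{\alpha<\lambda}D_\alpha$. Let $T$ be the tree of all $\le_\PP$-descending sequences $s$ with $\len(s)<\lambda$, $s(0)\le p$, and $s(\beta+1)\in D_\beta$ whenever $\beta+1<\len(s)$, ordered by end-extension. Since $\lambda$ is regular, the union of a chain of fewer than $\lambda$ nodes again has length $<\lambda$, hence is a node, so $T$ is $\lambda$-closed; and $T$ has no maximal element, since a node of zero or successor length is extended using the density of the relevant $D_\beta$, while a node of limit length $\gamma<\lambda$ is extended by a lower bound of it --- one exists because $\gamma<\lambda^+$ and $\PP$ is $\lambda^+$-closed. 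Hence $\DC_\lambda$ yields a chain of length $\lambda$ in $T$; its union is a $\le_\PP$-descending $\lambda$-sequence meeting every $D_\beta$, and $\lambda^+$-closure gives it a lower bound $q$, which lies in each $D_\beta$ by openness and has $q\le p$. As $p$ and $\langle D_\alpha\rangle$ were arbitrary, $\PP$ is $\lambda^+$-distributive.

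\emph{From distributivity to $\DC_\lambda$.} Let $T$ be a $\lambda$-closed tree; if it has a maximal element we are done, so assume it does not and, toward a contradiction, that it has no chain of length $\lambda$, i.e.\ every chain of $T$ has order type $<\lambda$. Turn $T$ into a forcing $\PP$ by the reversed order, $t\le_\PP s\iff s\le_T t$. A $\le_\PP$-descending sequence of length $\le\lambda$ has as its range a chain of order type $<\lambda$, hence of size $<\lambda$, hence with an upper bound in $T$ by $\lambda$-closedness; so $\PP$ is $\lambda^+$-closed, and the same computation shows each cone $\PP_s=\{t\in T : t\ge_T s\}$ is $\lambda^+$-closed. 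For $\alpha<\lambda$ put $D_\alpha=\{t\in T : \text{the predecessors of }t\text{ have order type }\ge\alpha\}$, an open subset of $\PP$. I would prove by induction on $\alpha<\lambda$ that $D_\alpha$ is dense: the successor step needs only one use of the no-maximal-element assumption, and at a limit $\gamma$, given $s$, I note that each $D_\beta\cap\PP_s$ with $\beta<\gamma$ is dense and open in $\PP_s$ and apply the hypothesis to the $\lambda^+$-closed forcing $\PP_s$ to get that $\bigcap_{\beta<\gamma}(D_\beta\cap\PP_s)$ is dense, thereby producing a \emph{single} $t\ge_T s$ whose predecessors have order type $\ge\gamma$. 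Once every $D_\alpha$ is dense (and open), the hypothesis applied to $\PP$ itself makes $\bigcap_{\alpha<\lambda}D_\alpha$ dense and in particular nonempty, and any $t$ in it has $\ge\lambda$ predecessors, i.e.\ an initial segment of order type $\lambda$ --- a chain of length $\lambda$ in $T$, contradicting our assumption.

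The step I expect to be the real obstacle is exactly the limit stage of that induction. In $\ZF$ one cannot climb the tree one level at a time, since assembling a cofinal chain through $\gamma$ levels would already require $\DC_\gamma$; the device that avoids this is to spend the distributivity hypothesis twice, once on each cone $\PP_s$ --- converting ``$\gamma$ many choices'' into a single density assertion --- and once on $\PP$ at the end, which is also why the equivalence is a real theorem and not a $\ZFC$ triviality. Everything else is routine bookkeeping: keeping track of where the regularity of $\lambda$ enters, and checking that $\lambda^+$-closure (as opposed to mere $\lambda$-closure) is precisely what delivers lower bounds of descending sequences of length $\lambda$.
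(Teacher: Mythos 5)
Your proof is correct, and while the forward direction is the standard argument the paper alludes to (correctly recast as a single application of $\DC_\lambda$ to the tree of partial descending sequences, rather than as a $\ZFC$-style recursion), your backward direction takes a genuinely different route. The paper argues through an actual generic extension: a counterexample to $\DC_\lambda$ is a $\lambda$-closed tree with no maximal element and no $\lambda$-chain, which, reversed, is vacuously $\lambda^+$-closed, and the generic branch through it is a new $\lambda$-sequence --- so ``distributive'' is read there as ``adds no new $\lambda$-sequences'', and genericity does all the tree-climbing. You instead stay in the ground model, read $\lambda^+$-distributivity in its Baire-category form (intersections of at most $\lambda$ dense open sets are dense), and pay for that with the induction on the level-sets $D_\alpha$, whose limit stage you handle by applying the hypothesis to each cone $\PP_s$; this is precisely the point where a naive level-by-level recursion would smuggle in $\DC_\gamma$, and your workaround is the right one. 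What your version buys is that it never invokes generic filters, and that it is explicit about which formulation of distributivity is meant --- a distinction worth being careful about over $\ZF$, since the usual proof that ``no new $\lambda$-sequences'' yields ``dense intersections'' involves choosing witnesses. What the paper's version buys is brevity: the only dense sets it needs are the trivially dense ones that force the generic branch to be unbounded, so no induction is required. Minor quibbles only: extending a node of length zero uses $q\le p$ rather than any $D_\beta$, and the degenerate case of the empty tree should be excluded by convention; neither affects the argument.
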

\begin{proof}[Sketch of Proof]
Assuming $\DC_\lambda$ the standard proof in $\ZFC$ translates immediately. In the other direction, if $\DC_\lambda$ fails, there is a tree which is $\lambda$-closed, but has no $\lambda$-chains and thus it is vacuously $\lambda^+$-closed, however forcing with the tree adds a $\lambda$-sequence, so it is a witness to the failure of the distributivity.
\end{proof}

We will say that a class $A$ is \textit{$\kappa$-closed} if $A^{<\kappa}\subseteq A$. Of course this depends on the universe, and when that is not clear from context we will be sure to explicitly state what is the universe to which the closure is relative.

\subsection{Some Forcing Shorthands}

We will want to define and manipulate names in a fairly explicit manner. This means that we cannot make the usual simplifying assumptions that let us choose arbitrary name with this and that kind of properties. To that end, we define a few shorthand notations.

We say that a name $\dot y$ \textit{appears} in a name $\dot x$, if there is an ordered pair $\tup{p,\dot y}\in\dot x$. We similarly say a condition $p$ appears in $\dot x$ if there is an ordered pair $\tup{p,\dot y}\in\dot x$.

If $\{\dot y_i\mid i\in I\}$ is a collection of names, we define $\{\dot y_i\mid i\in I\}^\bullet$ to be the obvious way of turning it into a name, namely, $\{\tup{1,\dot y_i}\mid i\in I\}$. This extends to other very canonical definitions, e.g.\ $\tup{\dot x,\dot y}^\bullet$ is the simplest way of creating the name of an ordered pair with $\dot x$ and $\dot y$. Using this notation, by the way, $\check x=\{\check y\mid y\in x\}^\bullet$.

Additionally, if $\dot x$ is a name, and $p$ is a condition, we write $\dot x\restriction p$ for the name $\{\tup{q,\dot y}\mid q\leq p, \dot y\text{ appears in }\dot x, q\forces\dot y\in\dot x\}$. It is easy to verify that $p\forces\dot x=\dot x\restriction p$, and if $q\incompatible p$, then $q\forces\dot x\restriction p=\varnothing$.

\subsection{Symmetric Extensions}
If $\PP$ is a forcing, and $\pi$ is an automorphism of $\PP$, then $\pi$ extends to $\PP$ names recursively:\[\pi\dot x=\{\tup{\pi p,\pi\dot y}\mid\tup{p,\dot y}\in\dot x\}.\]
This action also respects the forcing relation, as shown in the following lemma.
\begin{lemma*}[The Symmetry Lemma]
Suppose that $p\in\PP$ is a condition, $\dot x$ is a $\PP$-name, $\varphi(\dot x)$ is a formula in the language of forcing, and $\pi$ is an automorphism of $\PP$, then\[p\forces\varphi(\dot x)\iff\pi p\forces\varphi(\pi\dot x).\qed\]
\end{lemma*}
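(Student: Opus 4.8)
The plan is to argue by induction on the complexity of $\varphi$, after first isolating the purely combinatorial content. Since $\pi$ is an automorphism of $\PP$ it is in particular an order isomorphism preserving incompatibility; hence for every $p$ the map $q\mapsto\pi q$ is a bijection of $\{q\in\PP:q\leq p\}$ onto $\{q\in\PP:q\leq\pi p\}$, and a set $D\subseteq\PP$ is dense below $p$ if and only if $\pi[D]$ is dense below $\pi p$. Dually, $\dot x\mapsto\pi\dot x$ is a bijection of the class of $\PP$-names onto itself with inverse $\dot x\mapsto\pi^{-1}\dot x$, and $\tup{r,\dot z}\in\dot x$ if and only if $\tup{\pi r,\pi\dot z}\in\pi\dot x$. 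Finally, it suffices to prove the forward implication for all $p$, $\dot x$, $\pi$: the converse follows by applying it to $\pi^{-1}$ in place of $\pi$, since $\pi^{-1}(\pi p)=p$ and $\pi^{-1}(\pi\dot x)=\dot x$.

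First I would handle the atomic formulas $\dot x\in\dot y$ and $\dot x=\dot y$ by the simultaneous induction on the ranks of $\dot x$ and $\dot y$ that underlies the very definition of the atomic forcing relation. Whichever standard formulation one adopts, each defining clause for ``$p\forces\dot x=\dot y$'' and ``$p\forces\dot x\in\dot y$'' refers only to the ordering and incompatibility of $\PP$ (equivalently, to density of suitable sets below $p$), to membership facts of the form $\tup{r,\dot z}\in\dot x$ or $\tup{r,\dot z}\in\dot y$, and to the forcing of atomic statements about names of strictly smaller rank. By the previous paragraph $\pi$ respects the first kind of data, it converts the second into the corresponding facts about $\pi\dot x$ and $\pi\dot y$, and by the induction hypothesis it respects the third. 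Applying $\pi$ to every piece of data appearing in the definition therefore transforms ``$p\forces\dot x=\dot y$'' into ``$\pi p\forces\pi\dot x=\pi\dot y$'', and similarly for $\in$.

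For the inductive step it is enough to treat $\neg$, $\wedge$, and $\exists$, all other connectives and quantifiers being definable from these. The conjunction case is immediate from the induction hypothesis applied to each conjunct. For negation, $p\forces\neg\varphi(\dot x)$ means that no $q\leq p$ forces $\varphi(\dot x)$; since $q\mapsto\pi q$ maps $\{q\leq p\}$ bijectively onto $\{q\leq\pi p\}$ and, by the induction hypothesis, $q\forces\varphi(\dot x)\iff\pi q\forces\varphi(\pi\dot x)$, this is equivalent to no $q'\leq\pi p$ forcing $\varphi(\pi\dot x)$, i.e.\ to $\pi p\forces\neg\varphi(\pi\dot x)$. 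For the existential quantifier, $p\forces\exists y\,\varphi(y,\dot x)$ means that $\{q\leq p:\exists\PP\text{-name }\dot y,\ q\forces\varphi(\dot y,\dot x)\}$ is dense below $p$; pushing this set through the bijections $q\mapsto\pi q$ and $\dot y\mapsto\pi\dot y$ and using the induction hypothesis $q\forces\varphi(\dot y,\dot x)\iff\pi q\forces\varphi(\pi\dot y,\pi\dot x)$ shows it is equivalent to $\pi p\forces\exists y\,\varphi(y,\pi\dot x)$. I expect the only genuine work to be the bookkeeping in the atomic case — pinning down one formulation of the atomic forcing relation and verifying that $\pi$ commutes with each of its clauses; the propositional and quantifier steps above are then entirely formal.
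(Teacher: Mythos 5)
Your proof is correct and is the standard argument: reduce to one implication via $\pi^{-1}$, handle the atomic cases by induction on name rank using that $\pi$ preserves the order, incompatibility, and density data in the recursive definition of atomic forcing, and then do a routine induction on formula complexity. The paper states this lemma without proof (it is treated as standard background, hence the \qed in the statement itself), so there is nothing to compare against; your write-up fills in exactly the expected details, with the only remaining bookkeeping being the verification, for one fixed formulation of the atomic forcing relation, that $\pi$ commutes with each clause, as you correctly note.
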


Fix an automorphism group $\sG\leq\aut(\PP)$. We say that $\sF$ is a \textit{normal filter of subgroups} of $\sG$ if it is a filter on the lattice of subgroups which is closed under conjugations. Namely, $\sF$ is closed under supergroups (with respect to $\sG$) and intersections, and if $\pi\in\sG$ and $H\in\sF$, then $\pi H\pi^{-1}\in\sF$ as well. If $\PP$ is a forcing, $\sG$ is an automorphism group of $\PP$, and $\sF$ is a normal filter of subgroups of $\sG$ we say that $\tup{\PP,\sG,\sF}$ is a \textit{symmetric system}.\footnote{In many cases, it is enough to consider a normal filter base of subgroups, rather than a filter of subgroups, and we will not bother to make the distinction.}

We say that a $\PP$-name is \textit{$\sF$-symmetric} if $\sym_\sG(\dot x)=\{\pi\in\sG\mid\pi\dot x=\dot x\}\in\sF$, and if this property holds hereditarily for names which appear in $\dot x$, we say that $\dot x$ is \textit{hereditarily $\sF$-symmetric}. The class of hereditarily $\sF$-symmetric names is denoted by $\HS_\sF$. We will omit the subscripts when it is clear what is the symmetric system, which will be most of the time.

\begin{theorem*}
Suppose that $\tup{\PP,\sG,\sF}$ is a symmetric system, let $G$ be a $V$-generic filter, then $M=\HS^G=\{\dot x^G\mid\dot x\in\HS\}$ is a transitive class of $V[G]$ such that $V\subseteq M\subseteq V[G]$, and $M$ is a class model of $\ZF$. 
\end{theorem*}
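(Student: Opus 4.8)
The plan is to dispatch the three containments and transitivity directly, then verify the $\ZF$ axioms, the essential tool being a forcing relation relativized to $\HS$.

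First I would handle the easy parts. $M\subseteq V[G]$ is immediate since every member of $M$ is a name evaluated by $G$. For $V\subseteq M$ one shows by induction on rank that $\pi\check x=\check x$ for all $\pi\in\sG$, so $\sym_\sG(\check x)=\sG\in\sF$ and, this passing to all names appearing in $\check x$, one gets $\check x\in\HS$ and hence $x=\check x^G\in M$; in particular $\Ord\subseteq M$. Transitivity is built into the definition of $\HS$: if $\dot x\in\HS$ and $a\in\dot x^G$ then $a=\dot y^G$ for some $\dot y$ appearing in $\dot x$, and any such $\dot y$ is in $\HS$. One also checks that $M$ is a definable class of $V[G]$. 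Extensionality and Foundation then hold because $M$ is transitive and well-founded; Pairing holds because $\{\dot x,\dot y\}^\bullet\in\HS$ whenever $\dot x,\dot y\in\HS$ (its symmetry group contains $\sym(\dot x)\cap\sym(\dot y)\in\sF$) and evaluates to $\{\dot x^G,\dot y^G\}$; and Infinity follows from $\omega\in V\subseteq M$.

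The substantive axioms are Separation, Replacement, Union and Power Set, and for these I would introduce a relativized forcing relation $\forces^{\HS}$, defined in $V$ by the usual recursion on formulas but with every quantifier restricted to range over $\HS$-names. About it I would prove: (i) it is definable in $V$; (ii) a truth lemma, $M\models\varphi(\vec{\dot x}^G)\iff\exists p\in G\ (p\forces^{\HS}\varphi(\vec{\dot x}))$; and (iii) a symmetry lemma, $p\forces^{\HS}\varphi(\vec{\dot x})\iff\pi p\forces^{\HS}\varphi(\pi\vec{\dot x})$ for every $\pi\in\sG$. Proving (iii)---by induction on $\varphi$, in tandem with the Symmetry Lemma---is the main obstacle, and it is precisely here that normality of $\sF$ is needed: normality gives $\sym(\pi\dot y)=\pi\,\sym(\dot y)\,\pi^{-1}\in\sF$, so $\pi$ maps $\HS$ bijectively onto itself, which is what makes the restricted quantifier $\exists\dot y\in\HS$ invariant under $\pi$ at the existential step of the induction.

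Granted $\forces^{\HS}$, the rest follows the usual pattern for verifying $\ZF$ in a forcing extension, while tracking symmetry. For Separation, given $a=\dot a^G$, parameters $\vec b=\vec{\dot b}^G$ from $M$, and a formula $\varphi$, I would take $\dot s=\{\tup{p,\dot z}\mid\dot z\text{ appears in }\dot a,\ p\forces^{\HS}(\dot z\in\dot a\wedge\varphi(\dot z,\vec{\dot b}))\}$: by (ii) its evaluation is $\{u\in a\mid M\models\varphi(u,\vec b)\}$, and by (iii) $\sym(\dot s)\supseteq\sym(\dot a)\cap\bigcap_i\sym(\dot b_i)\in\sF$, while hereditary symmetry is inherited from $\dot a$; so $\dot s\in\HS$. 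Union and Power Set are obtained by exhibiting, for $a=\dot a^G$, a name whose evaluation contains $\bigcup a$, respectively $\power^M(a)$: for Union one collects all names appearing two levels below $\dot a$; for Power Set one notes that every subset of $a$ lying in $M$ has a name of the form $\dot b^*=\{\tup{p,\dot z}\mid\dot z\text{ appears in }\dot a,\ p\forces\dot z\in\dot b\}$, so all such names belong to the single $\sG$-invariant set $S=\power(\PP\times\{\dot z\mid\dot z\text{ appears in }\dot a\})\cap\HS$, and then $\{\dot c\mid\dot c\in S\}^\bullet$ is in $\HS$. Replacement proceeds as Separation, except one first uses Replacement in $V$ to bound by some $V_\alpha$ the ranks of the names needed to witness the images of the names appearing in $\dot a$; since $\HS\cap V_\alpha$ is a $\sG$-invariant set, it supplies enough names to form a name for the image, which is then shown symmetric exactly as $\dot s$ was. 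This exhausts $\ZF$.
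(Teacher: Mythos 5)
The paper does not prove this statement: it is quoted in the Preliminaries as the standard, classical theorem on symmetric extensions (see Jech, \emph{Set Theory}, Ch.~15, or \cite{Jech:AC1973}), so there is no in-paper argument to compare against. Your proposal is essentially the textbook proof and is correct: the easy containments, transitivity, and the light axioms are handled properly; you correctly isolate the symmetry lemma for $\forces^{\HS}$ as the crux and correctly identify normality of $\sF$ as what makes $\pi$ an automorphism of $\HS$ (via $\sym(\pi\dot y)=\pi\sym(\dot y)\pi^{-1}$), which is exactly the point at which the restricted existential quantifier survives the induction; and the Separation, Union, Power Set, and Replacement names you build are the standard ones, with the right symmetry groups ($\sym(\dot a)\cap\bigcap_i\sym(\dot b_i)$, etc.) and with hereditary symmetry inherited from $\dot a$. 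One small imprecision: in the Power Set step the set $S=\power(\PP\times\{\dot z\mid\dot z\text{ appears in }\dot a\})\cap\HS$ is not $\sG$-invariant in general but only $\sym(\dot a)$-invariant, since an arbitrary $\pi\in\sG$ carries names appearing in $\dot a$ to names appearing in $\pi\dot a$; this is harmless because $\sym(\dot a)\in\sF$ is all you need for the resulting name to be symmetric, but the wording should be corrected.
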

The model $M$ in the above theorem is called a \textit{symmetric extension} of $V$. Finally, we have a symmetric forcing relation, namely $\forces^\HS$, which is the relativization of the forcing to the symmetric extension, which satisfies the same basic properties as $\forces$.

\begin{definition}
If $\tup{\PP,\sG,\sF}$ is a symmetric system, we say that a condition $p\in\PP$ is \textit{tenacious}, if $\{\pi\in\sG\mid\pi p=p\}\in\sF$. We say that $\PP$ is tenacious, if there is a dense subset of tenacious conditions.
\end{definition}
It turns out that this concept is somehow a bit redundant, and if $\tup{\PP,\sG,\sF}$ is a symmetric system, then we can define a forcing $\PP^*\subseteq\PP$ such that $\tup{\PP^*,\sG,\sF}$ is equivalent to $\tup{\PP,\sG,\sF}$ and it is is not only tenacious, but in fact every condition is tenacious, as was shown in \cite[\S12]{Karagila:2016}. So when it is useful, we may assume $\PP$ is tenacious without loss of generality.
\section{Dependent Choice in Symmetric Extensions}
In \cite{Karagila:2014} we proved the following folklore lemma.
\begin{lemma}[Lemma~2.1 in \cite{Karagila:2014}]\label{lemma:closure}
Assume $\ZFC$ holds. Suppose that $\tup{\PP,\sG,\sF}$ is a symmetric system such that $\PP$ is $\lambda$-closed and $\sF$ is $\lambda$-complete, then $\forces^\HS\DC_{<\lambda}$.
\end{lemma}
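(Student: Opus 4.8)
The plan is to verify $\forces^\HS\DC_\kappa$ for each cardinal $\kappa<\lambda$ separately. Fix such a $\kappa$, a name $\dot T\in\HS$, and a condition $p\in\PP$ forcing, relative to $\HS$, that $\dot T$ is a $\check\kappa$-closed tree---i.e.\ a tree in which every chain of length $<\check\kappa$ has an upper bound. Since every element of a symmetric extension is of the form $\dot T^G$ for a name $\dot T\in\HS$, it suffices to show that $p\forces^\HS$ ``$\dot T$ has a maximal element or a chain of length $\check\kappa$''; and for this it is enough to check that the set of $q\leq p$ forcing this disjunction is dense below $p$. So fix $q\leq p$: if some $q'\leq q$ forces ``$\dot T$ has a maximal element'' we are done with $q'$, so we may assume $q\forces^\HS$ ``$\dot T$ has no maximal element''. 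We are thus reduced to working below a condition $q$ that forces ``$\dot T$ is a $\check\kappa$-closed tree with no maximal element'', and producing $q^*\leq q$ with $q^*\forces^\HS$ ``$\dot T$ has a chain of length $\check\kappa$''.

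Working in $V$, where $\AC$ is available, I would construct by recursion on $\alpha\leq\kappa$ a decreasing sequence $\langle q_\alpha\mid\alpha\leq\kappa\rangle$ of conditions with $q_0=q$, together with names $\langle\dot t_\alpha\mid\alpha<\kappa\rangle$ in $\HS$, maintaining the invariant that $q_\alpha\forces^\HS$ ``$\langle\dot t_\beta\mid\beta<\alpha\rangle$ is a strictly increasing sequence in $\dot T$''. At a limit stage $\alpha\leq\kappa$ I let $q_\alpha$ be a lower bound of $\langle q_\beta\mid\beta<\alpha\rangle$, which exists because the sequence has length $\alpha<\lambda$ and $\PP$ is $\lambda$-closed; the invariant is inherited at limits. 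At a successor stage $\alpha=\beta+1$, the chain $\langle\dot t_\gamma\mid\gamma<\beta\rangle$ has length $\beta<\kappa$, so $q_\beta$ forces that it has an upper bound in $\dot T$ (by $\check\kappa$-closedness), hence a strict upper bound (since $\dot T$ has no maximal element); here the case $\beta=0$ is included, a $\check\kappa$-closed tree being nonempty because the empty chain has an upper bound. By the basic properties of $\forces^\HS$, the set of $r\leq q_\beta$ admitting a name $\dot s\in\HS$ with $r\forces^\HS$ ``$\dot s\in\dot T$ and $\dot t_\gamma<_{\dot T}\dot s$ for all $\gamma<\beta$'' is dense below $q_\beta$; I pick $q_{\beta+1}$ in this set and let $\dot t_\beta$ be the corresponding name, which preserves the invariant. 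Every choice along the way is an appeal to $\AC$ in $V$.

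Now set $q^*=q_\kappa$ and $\dot f=\{\tup{\check\beta,\dot t_\beta}^\bullet\mid\beta<\kappa\}^\bullet$. By the invariant at $\alpha=\kappa$, $q^*\forces^\HS$ ``$\dot f$ is a function with domain $\check\kappa$ whose range is a chain of length $\check\kappa$ in $\dot T$''. The remaining point---and the only place the hypotheses on $\sF$ are used---is that $\dot f\in\HS$. Each $\dot t_\beta$ and each $\check\beta$ lies in $\HS$, hence so does each $\tup{\check\beta,\dot t_\beta}^\bullet$; and since every $\pi\in\sG$ fixes $1$ and every $\check\beta$, we have $\pi\dot f=\{\tup{\check\beta,\pi\dot t_\beta}^\bullet\mid\beta<\kappa\}^\bullet$, so $\sym_\sG(\dot f)\supseteq\bigcap_{\beta<\kappa}\sym_\sG(\dot t_\beta)$. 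Each $\sym_\sG(\dot t_\beta)\in\sF$ because $\dot t_\beta\in\HS$, and $\sF$ is $\lambda$-complete with $\kappa<\lambda$, so this intersection lies in $\sF$; hence $\dot f\in\HS$. Therefore $q^*\forces^\HS$ ``$\dot T$ has a chain of length $\check\kappa$'', as required.

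The only real content is the last paragraph: run with $\AC$ in the ground model, the recursion produces names whose symmetry subgroups shrink at every step, and $\lambda$-completeness of $\sF$ is exactly what keeps the length-$\kappa$ collecting name $\dot f$ hereditarily symmetric, while $\lambda$-closedness of $\PP$ is what lets the recursion pass limit stages and reach $q_\kappa$. The reduction to the no-maximal-element case and the manipulations of $\forces$ and $\forces^\HS$ are routine.
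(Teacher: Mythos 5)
Your proof is correct, but it takes a different route from the paper's. The paper factors the argument through a closure lemma: it first shows that every function $f\colon\kappa\to\HS^G$ lying in $V[G]$ already lies in $\HS^G$ (by deciding $\dot f(\check\alpha)$ along a decreasing $\kappa$-sequence of conditions and using $\lambda$-completeness of $\sF$ to keep the collected name in $\HS$), and then invokes the general fact that a $\lambda$-closed inner model of a model of $\DC_{<\lambda}$ satisfies $\DC_{<\lambda}$. You instead argue directly inside the symmetric extension, building the branch through the given tree by a recursion on conditions and witness names. The two arguments use the hypotheses in exactly the same way ($\lambda$-closure of $\PP$ to pass limit stages, $\lambda$-completeness of $\sF$ to keep the length-$\kappa$ collecting name symmetric), so neither is more elementary; what the paper's decomposition buys is reusability---the closure statement is precisely the part that gets swapped out in the chain-condition and properness variants (\autoref{lemma:chain-condition}, \autoref{lemma:proper}) and it feeds the later discussion around $\SVC$---whereas your version avoids having to capture arbitrary $V[G]$-functions into the symmetric extension and only ever manipulates the one tree. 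One small point you should make explicit in the successor step: to apply the $\check\kappa$-closure of $\dot T$ \emph{inside} the symmetric extension to the chain built so far, the name $\{\tup{\check\gamma,\dot t_\gamma}^\bullet\mid\gamma<\beta\}^\bullet$ must itself be in $\HS$ (the closure of $\dot T$ only quantifies over chains of the symmetric extension), so the intersection-of-symmetry-groups argument you give at the end for $\dot f$ is actually needed as part of the recursion invariant at every stage $\beta\leq\kappa$, not just at the last one; since $\sF$ is $\lambda$-complete this is harmless, but it belongs in the induction hypothesis.
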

The proof is simple enough to merit repeating. But to simplify generalizations and variations, we will first extract the following lemma from the standard proof.

\begin{lemma}\label{lemma:model-closure}
Suppose that $M$ is a $\lambda$-closed inner model of $N$. If $N\models\DC_{<\lambda}$, then $M\models\DC_{<\lambda}$.
\end{lemma}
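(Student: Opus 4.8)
The plan is to verify the defining clause of $\DC_{<\lambda}$ inside $M$ directly, using the closure of $M$ to transfer witnesses from $N$. So fix $\kappa<\lambda$ and let $T\in M$ be a $\kappa$-closed tree (in the sense of $M$) with no maximal element; I must produce a chain of length $\kappa$ through $T$ that lies in $M$. The first step is to observe that $T$ remains $\kappa$-closed when computed in $N$: if $s$ is an increasing sequence in $T$ of length $<\kappa$, then $s\in M$ because $M$ is $\lambda$-closed and $\kappa<\lambda$, so the upper bound guaranteed in $M$ is also an upper bound in $N$; likewise $T$ has no maximal element in $N$. Hence $N\models\DC_{<\lambda}$ gives a chain $c\colon\kappa\to T$ with $c\in N$.

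The second step is the key point: $c$ is a $\kappa$-sequence of elements of $T\subseteq M$, and $\kappa<\lambda$, so by $\lambda$-closure of $M$ we get $c\in M$. Since being a chain through $T$ is absolute between $M$ and $N$ (it only refers to $T$, the tree order, and the values of $c$, all of which lie in $M$), $c$ witnesses the relevant instance of $\DC_{<\lambda}$ in $M$. As $T$ and $\kappa<\lambda$ were arbitrary, $M\models\DC_{<\lambda}$.

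The only thing that needs a little care — and the one genuine obstacle — is making sure the closure hypothesis is being used correctly: $\lambda$-closure of $M$ means $M^{<\lambda}\cap N\subseteq M$ (closure relative to $N$, as the paper emphasizes), and both $s$ (length $<\kappa<\lambda$) and $c$ (length $\kappa<\lambda$) are sequences of ordinals-or-elements-of-$M$ living in $N$, so each is captured by $M$. One should also note the degenerate possibility that $\kappa$ itself is not a cardinal in $N$ is irrelevant, since $\DC_\kappa$ for an ordinal $\kappa<\lambda$ follows from $\DC_{<\lambda}$ in $N$ regardless; and the case where $T$ already has short maximal branches is handled by the "maximal element" disjunct, which is again absolute. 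No other subtleties arise, and no appeal to choice in $M$ or $N$ beyond $\DC_{<\lambda}$ in $N$ is needed.
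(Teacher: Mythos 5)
Your proposal is correct and follows exactly the paper's argument: use $\lambda$-closure of $M$ to see that $T$ remains $\kappa$-closed (and without maximal element) in $N$, obtain a chain of length $\kappa$ there via $\DC_{<\lambda}$ in $N$, and pull it back into $M$ by the same closure. The extra care you take about what ``$\lambda$-closed relative to $N$'' means is a correct elaboration of the step the paper leaves implicit.
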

\begin{proof}
Suppose that $N$ satisfies $\DC_{<\lambda}$, and let $T\in M$ be a $\kappa$-closed tree without maximal element, for some $\kappa<\lambda$. By $\lambda$-closure of $M$, $T$ is also $\kappa$-closed in $N$, and therefore has a branch there, and this branch is a function from $\kappa$ to $T$, so it is in $M$, as wanted.
\end{proof}

\begin{proof}[Proof of \autoref{lemma:closure}]
Let $G$ be a $V$-generic filter, and let $M$ be $\HS^G$. It is enough, by the previous lemma, to prove that $M^\kappa\subseteq M$ for all $\kappa<\lambda$. And indeed, if $f\colon\kappa\to M$ for some $\kappa<\lambda$, let $\dot f$ be a name for $f$ such that all the names appearing in $\dot f$ are of the form $\tup{\check\alpha,\dot y}^\bullet$ where $\dot y\in\HS$.

Let $p$ be any condition such that $p\forces``\dot f$ is a function'', set $p_0=p$, and recursively extend $p_\alpha$ to $p_{\alpha+1}$ such that $p_{\alpha+1}$ decides the value of $\dot f(\check\alpha)$, going through limit steps using the fact that $\PP$ is $\lambda$-closed. Finally, for all $\alpha<\kappa$ there is a name $\dot y_\alpha\in\HS$ such that $p_\kappa\forces\dot f(\check\alpha)=\dot y_\alpha$, define $\dot g=\{\tup{\check\alpha,\dot y_\alpha}^\bullet\mid\alpha<\kappa\}^\bullet$. 

Let $H=\bigcap_{\alpha<\kappa}\sym(\dot y_\alpha)$. By $\lambda$-closure of $\sF$, $H\in\sF$. It is easy to verify that $H$ is a subgroup of $\sym(\dot g)$, so $\dot g\in\HS$ and $p_\kappa\forces\dot g=\dot f$. This means that there is a dense open set of conditions $q\leq p$ such that for some $\dot g\in\HS$, $q\forces\dot g=\dot f$, so by genericity, $\dot f^G=f\in M$ as wanted. Now by the previous lemma, $M\models\DC_\kappa$ for all $\kappa<\lambda$.
\end{proof}
\begin{lemma}\label{lemma:chain-condition}
We can replace ``$\PP$ is $\lambda$-closed'' by ``$\PP$ has the $\lambda$-c.c.'' in \autoref{lemma:closure}.\footnote{Amitayu Banerjee had let us know that he had independently made a similar observation.}
\end{lemma}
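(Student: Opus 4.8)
The plan is to argue exactly as in the proof of \autoref{lemma:closure}, reducing to showing that the symmetric extension $M=\HS^G$ is $\lambda$-closed in $V[G]$: then, since $V[G]\models\ZFC$ and hence $V[G]\models\DC_{<\lambda}$, \autoref{lemma:model-closure} yields $M\models\DC_{<\lambda}$. So let $G$ be $V$-generic, fix $\kappa<\lambda$ and $f\colon\kappa\to M$ with $f\in V[G]$, and as in \autoref{lemma:closure} fix a $\PP$-name $\dot f$ (\emph{not} necessarily in $\HS$) all of whose appearing names have the form $\tup{\check\alpha,\dot y}^\bullet$ with $\dot y\in\HS$, such that $\dot f^G=f$; fix also $p\in G$ with $p\forces``\dot f$ is a function with domain $\check\kappa$''. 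By the remark following the definition of tenacity we may assume $\PP$ is tenacious, so fix a dense set $D\subseteq\PP$ of tenacious conditions.

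Where the proof of \autoref{lemma:closure} uses $\lambda$-closure to thin $p$ down to a single condition $p_\kappa$ deciding all the values $\dot f(\check\alpha)$, here we instead use the $\lambda$-c.c.\ together with the mixing operation $\dot x\restriction r$. Fix $\alpha<\kappa$. The set of conditions $q\in D$ with $q\leq p$ for which there is a name $\dot y\in\HS$ appearing in $\dot f$ with $q\forces\dot f(\check\alpha)=\dot y$ is dense below $p$, so let $A_\alpha$ be a maximal antichain contained in it; then $A_\alpha$ is a maximal antichain below $p$ consisting of tenacious conditions, and $|A_\alpha|<\lambda$ by the chain condition. For each $q\in A_\alpha$ fix a witness $\dot y_{\alpha,q}\in\HS$, and put $\dot y_\alpha=\bigcup_{q\in A_\alpha}(\dot y_{\alpha,q}\restriction q)$. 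Using $q\forces\dot y_{\alpha,q}\restriction q=\dot y_{\alpha,q}$ and $q\forces\dot y_{\alpha,q'}\restriction q'=\varnothing$ for $q'\in A_\alpha\setminus\{q\}$, we get $q\forces\dot f(\check\alpha)=\dot y_\alpha$ for each $q\in A_\alpha$, and therefore $p\forces\dot f(\check\alpha)=\dot y_\alpha$ since $A_\alpha$ is a maximal antichain below $p$.

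Now comes the symmetry bookkeeping. Since $\pi(\dot x\restriction r)=(\pi\dot x)\restriction(\pi r)$ for every $\pi\in\sG$, any $\pi$ that fixes $\dot y_{\alpha,q}$ and fixes $q$ for every $q\in A_\alpha$ also fixes $\dot y_\alpha$. As each $q\in A_\alpha$ is tenacious, $\fix_\sG(q)=\{\pi\in\sG\mid\pi q=q\}\in\sF$, so by $\lambda$-completeness of $\sF$ and $|A_\alpha|<\lambda$ the subgroup $H_\alpha=\bigcap_{q\in A_\alpha}\bigl(\sym_\sG(\dot y_{\alpha,q})\cap\fix_\sG(q)\bigr)$ lies in $\sF$; it is a subgroup of $\sym_\sG(\dot y_\alpha)$, and the names appearing in $\dot y_\alpha$ are among those appearing in the $\dot y_{\alpha,q}$, so $\dot y_\alpha\in\HS$. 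Finally set $\dot g=\{\tup{\check\alpha,\dot y_\alpha}^\bullet\mid\alpha<\kappa\}^\bullet$ and $H=\bigcap_{\alpha<\kappa}H_\alpha$; since $\kappa<\lambda$ and $\sF$ is $\lambda$-complete, $H\in\sF$, and $H$ is a subgroup of $\sym_\sG(\dot g)$, so $\dot g\in\HS$ and $p\forces\dot g=\dot f$. Hence $f=\dot g^G\in M$, so $M$ is $\lambda$-closed in $V[G]$, and \autoref{lemma:model-closure} finishes the proof.

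The only point that is not an immediate transcription of \autoref{lemma:closure} is the invariance of the mixed name $\dot y_\alpha$: an automorphism fixing all the $\dot y_{\alpha,q}$ might still permute the antichain $A_\alpha$ and thereby move $\dot y_\alpha$. This is precisely where tenacity is used — it guarantees $\fix_\sG(q)\in\sF$ for each $q\in A_\alpha$, so that we may additionally require $\pi$ to fix each $q\in A_\alpha$ pointwise while still keeping $H_\alpha\in\sF$. One should of course also note that arranging tenacity does not destroy the $\lambda$-c.c., which it does not; but the manipulations of $\restriction$, the observation that a maximal antichain inside a dense set is maximal in $\PP$, and the verification $p\forces\dot g=\dot f$ are all routine.
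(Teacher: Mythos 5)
Your proof is correct and follows essentially the same route as the paper's: use the $\lambda$-c.c.\ to get maximal antichains of size ${<}\lambda$ deciding each value $\dot f(\check\alpha)$, mix the decided names via $\dot y\restriction q$, and use tenacity together with $\lambda$-completeness of $\sF$ to keep the mixed names in $\HS$. The only difference is that you spell out the details (in particular the identity $\pi(\dot x\restriction r)=(\pi\dot x)\restriction(\pi r)$ and the role of $\fix_\sG(q)$) that the paper's sketch leaves implicit.
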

\begin{proof}[Sketch of Proof]
We again appeal to the argument that $M$ is $\lambda$-closed in $V[G]$. Suppose that $\dot f$ is a $\PP$-name for a function $f\colon\kappa\to M$. 

For every $\alpha<\kappa$, let $D_\alpha$ be a maximal antichain of conditions $p$ such that for some $\dot y_p\in\HS$, $p\forces\dot f(\check\alpha)=\dot y_p$. We can now define $\dot y_\alpha$ to be the name obtained by $\bigcup_{p\in D_\alpha}\dot y_p\restriction p$. Without loss of generality we may assume that each condition is tenacious, so by intersecting, we can assume that $\pi\in\sym(\dot y_p)$ means that $\pi p=p$. In particular, by $\lambda$-completeness of $\sF$, $\bigcap_{p\in D_\alpha}\sym(\dot y_p)\in\sF$ and it is easy to see that this is a subgroup of $\sym(\dot y_\alpha)$. Therefore, $\dot y_\alpha\in\HS$.

It follows that $H=\bigcap_{\alpha<\kappa}\sym(\dot y_\alpha)$ is in $\sF$ and therefore $\dot g=\{\tup{\check\alpha,\dot y_\alpha}^\bullet\mid\alpha<\kappa\}^\bullet$ is such that $\dot g\in\HS$. Since $\forces\dot f=\dot g$, it follows that $f\in M$.
\end{proof}
This shows that if $\sF$ is $\sigma$-closed, both c.c.c.\ and $\sigma$-closed forcings would preserve $\DC$. Philipp Schlicht raised a natural question, will properness suffice?
\begin{lemma}\label{lemma:proper}
If $\PP$ is proper and $\sF$ is $\sigma$-complete, then $\DC$ is preserved.
\end{lemma}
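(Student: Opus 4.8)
The plan is to keep the overall shape of the proofs of \autoref{lemma:closure} and \autoref{lemma:chain-condition} --- reduce an instance of $\DC$ in $M=\HS^G$ to the construction of a suitable hereditarily symmetric name --- but to work around the fact that properness does \emph{not} make $M$ closed under $\omega$-sequences in $V[G]$, so that \autoref{lemma:model-closure} is no longer available. The substitute for closure will be a countable elementary submodel together with a master condition. As a preliminary, exactly as in \autoref{lemma:chain-condition}, I would pass to a dense tenacious suborder so that without loss of generality $\fix(p):=\set{\pi\in\sG\mid\pi p=p}\in\sF$ for every condition $p$; this preserves both properness and the $\sigma$-completeness of $\sF$. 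Now fix a generic $G$, set $M=\HS^G$, and let $X,R\in M$ be an instance of $\DC$, i.e.\ $R$ is a total relation on a non-empty $X$; choose $\dot X,\dot R\in\HS$ naming them and $p\in G$ forcing this over $\HS$. It suffices to prove that below an arbitrary $p'\leq p$ there is a condition forcing the existence of an $\HS$-name for an $R$-chain of length $\omega$ through $X$; a density/genericity argument then produces such a chain inside $M$.

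So fix $p'\leq p$. Pick a large $\theta$ and a countable $N\prec H(\theta)$ with $\PP,\sG,\sF,\dot X,\dot R,p'\in N$, and --- using that $\PP$ is proper --- an $(N,\PP)$-generic condition $q\leq p'$. Working \emph{inside} $N$, which satisfies $\ZFC$ and computes $\HS$, $\sym$ and $\forces^\HS$ correctly, I would build by recursion a sequence $\tup{(A_n,\tup{\dot w^n_r\mid r\in A_n})\mid n<\omega}\in N$ with the following properties: each $A_n$ is a maximal antichain below $p'$; the antichains refine, in the sense that every $r'\in A_{n+1}$ lies below a (necessarily unique) element of $A_n$; each $\dot w^n_r\in\HS$ satisfies $r\forces^\HS\dot w^n_r\in\dot X$; and whenever $r'\in A_{n+1}$ lies below $r\in A_n$ one has $r'\forces^\HS\dot w^n_r\mathrel{\dot R}\dot w^{n+1}_{r'}$. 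The recursion is driven by the observation that, given $r\in A_n$, the set $\set{s\leq r\mid\exists\dot z\in\HS,\ s\forces^\HS(\dot w^n_r\mathrel{\dot R}\dot z\wedge\dot z\in\dot X)}$ is dense below $r$ --- because $r$ forces $\dot R$ total on $\dot X$ and $\dot w^n_r\in\dot X$ --- so that using $\AC$ inside $N$ one may fix once and for all a function returning the next antichain together with the next layer of $\HS$-names; the base case $A_0$ is handled the same way from $p'\forces^\HS\dot X\neq\varnothing$. The crucial point is that nothing is mixed at this stage: the $A_n$ may well be uncountable in $V$, but inside $N$ they cause no trouble, since no name is assembled from them.

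Now step \emph{outside} $N$. For each $n$ put $\dot b_n:=\bigcup_{r\in A_n\cap N}\bigl(\dot w^n_r\restriction r\bigr)$ and $\dot b:=\set{\tup{\check n,\dot b_n}^\bullet\mid n<\omega}^\bullet$. Since $N$ is countable, each $A_n\cap N$ is countable, so by $\sigma$-completeness of $\sF$ and tenacity the group $\bigcap_{r\in A_n\cap N}\bigl(\sym(\dot w^n_r)\cap\fix(r)\bigr)$ lies in $\sF$; this group fixes $\dot b_n$, hence $\dot b_n\in\HS$, and then $\bigcap_{n<\omega}\sym(\dot b_n)\in\sF$ witnesses $\dot b\in\HS$. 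Because $q$ is $(N,\PP)$-generic, each $A_n\cap N$ is predense below $q$; for $r\in A_n\cap N$ the mutual incompatibility of the remaining members of $A_n\cap N$ with $r$ forces $\dot b_n=\dot w^n_r$, whence $r\forces\dot b_n\in\dot X$, and for $r'\in A_{n+1}\cap N$ with predecessor $r\in A_n$ (which is in $N$, being definable from $A_n$ and $r'$) one likewise gets $r'\forces\dot b_n\mathrel{\dot R}\dot b_{n+1}$. Running these checks through the predense sets $A_n\cap N$ shows that $q$ forces, over $\HS$, that $\dot b$ names an $R$-chain of length $\omega$ through $\dot X$ --- which is the condition below $p'$ we were after.

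The main obstacle is the simultaneous presence of two weaknesses. Properness only buys a master condition, not closure, so the entire chain has to be planned out before any genuine generic step is taken; and $\sF$ is only $\sigma$-complete, so the harmless mixing of $\HS$-names over antichains used in the $\lambda$-closed and $\lambda$-c.c.\ arguments would here wreck hereditary symmetry unless every antichain involved is countable. Reconciling these is precisely what forces the split into a recursion performed inside the countable $N$ --- where antichains may be large but no name is mixed --- and an assembly step performed outside $N$, where all mixing is confined to the countable traces $A_n\cap N$. The remaining points, namely that passing to a dense tenacious suborder preserves properness and that $N\prec H(\theta)$ for large $\theta$ computes $\forces^\HS$ and $\HS$ faithfully, are routine.
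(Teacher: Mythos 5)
Your proof is correct, but it takes a detour from the paper's argument, and the reason you give for the detour does not hold up. The paper's proof \emph{does} go through \autoref{lemma:model-closure}: properness together with $\sigma$-completeness of $\sF$ is enough to show that $M=\HS^G$ is $\sigma$-closed in $V[G]$. Given any $f\colon\omega\to M$ in $V[G]$, take a name $\dot f$ and a condition forcing $\dot f$ to be a function into the symmetric extension; for each $n$ choose in $N$ a maximal antichain $A_n$ of conditions deciding $\dot f(\check n)$ to be some fixed $\HS$-name, and then perform exactly the mixing you perform---over the countable traces $A_n\cap N$, using tenacity and $\sigma$-completeness---to obtain $\dot y_n\in\HS$ which the $(N,\PP)$-generic condition forces equal to $\dot f(\check n)$; assembling the $\dot y_n$ gives an $\HS$-name forced equal to $\dot f$, and density finishes. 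So your opening claim that ``properness does not make $M$ closed under $\omega$-sequences'' is mistaken, and the instance-by-instance fusion of refining antichains is not needed. That said, everything you actually do is sound: the density of conditions producing the next link of the chain, the membership of the mixed names $\dot b_n$ in $\HS$ (countable traces, tenacity, $\sigma$-completeness), the predensity of $A_n\cap N$ below the master condition, and the final density argument are all handled correctly, so you obtain a valid, self-contained verification of $\DC$ in $M$. What you lose relative to the paper's route is the stronger and reusable conclusion that $M$ is $\sigma$-closed in $V[G]$, from which $\DC$ falls out of \autoref{lemma:model-closure} for free; what you gain is independence from that lemma, at the cost of a longer construction.
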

\begin{proof}[Sketch of Proof]
Let $G$ be a $V$-generic filter, and $M$ the symmetric extension given by $\HS^G$. Suppose that $f\colon\omega\to M$ is a function, and let $\dot f$ be a name for it, and some $p$ which forces that $\dot f$ is a function into $\HS$. 

Let $N$ be a countable elementary submodel of $H(\theta)$ for a sufficiently large $\theta$, with $\tup{\PP,\sG,\sF},\dot f,p\in N$. By elementarity, $N$ contains ``enough'' names from $\HS$ to compute all the possible values of $\dot f$, as there are only countably many of those, we can intersect the relevant groups and remain in $\sF$ to fix all the necessary names. Next, find an $N$-generic condition extending $p$, and use it to define a name for a function in $\HS$ which the $N$-generic condition will force to be equal to $f$. By density, this must have happened in $V[G]$, so $f\in M$.
\end{proof}

The keen eyed reader might have noticed at this point that all these proofs are the same flavor: the symmetric extension is closed under ${<}\kappa$-sequences in the full extension. Does that provide us with a full characterization of symmetric extensions which satisfy $\DC_{<\kappa}$? 

The answer is negative, as to be expected. Let $\tup{\PP,\sG,\sF}$ be any symmetric system which preserve $\DC_{<\kappa}$, by consider the product of $\tup{\PP,\sG,\sF}$ with the symmetric system $\tup{\Add(\omega,1),\aut(\Add(\omega,1)),\{\aut(\Add(\omega,1))\}}$. Namely, we take the product of $\PP$ with adding a single Cohen real, the full automorphism group, and the trivial filter of subgroups. It is not hard to see that only $\PP$-names can be symmetric in this extension, so the symmetric extension is the same as that given just by $\tup{\PP,\sG,\sF}$, but the full generic extension contains a Cohen real, therefore $\sigma$-closure is violated.

But is this the only trivial obstruction? The following theorem shows that morally, the answer is yes. We will need the axiom $\SVC$, or ``Small Violation of Choice'' formulated by Andreas Blass in \cite{Blass:1979}. The axiom can be stated as ``The axiom of choice can be forced with a set-forcing''. In particular, symmetric extensions satisfy $\SVC$, at least under the assumption that the ground model did.
\begin{theorem}
Suppose that $M\models\DC_{<\kappa}+\SVC$, then $M$ is $\kappa$-closed in a model of $\ZFC$.
\end{theorem}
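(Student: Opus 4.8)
The plan is to realize the desired model of $\ZFC$ as a forcing extension $N=M[G]$ of $M$: the $\SVC$-witness is used to build a forcing that collapses enough to make the axiom of choice hold in $N$, while $\DC_{<\kappa}$ keeps that forcing closed enough that it adds no new ${<}\kappa$-sequences of elements of $M$. Concretely, let $S\in M$ witness $\SVC$, so that in $M$ every set is a surjective image of $S\times\eta$ for some ordinal $\eta$. Fix a regular cardinal $\lambda\geq\kappa$ of $M$; such exists, since $\DC_{<\kappa}$---which implies $\DC_\kappa$, hence $\AC_\kappa$, when $\kappa$ is singular---gives enough choice in $M$ to see that at least one of $\kappa,\kappa^+$ is regular there. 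Let $\PP=\Col(\lambda,S)^M$ be the forcing whose conditions are partial functions $p$ with $\dom p\subseteq\lambda$, $\rng p\subseteq S$ and $|p|<\lambda$, ordered by reverse inclusion; as $\lambda$ is regular in $M$, $\PP$ is $\lambda$-closed there. Let $G$ be $M$-generic for $\PP$ and put $N=M[G]$; since $M\models\ZF$, so does $N$.

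I would first check $N\models\AC$. By genericity $\bigcup G$ is a surjection of $\lambda$ onto $S$, since the conditions placing a given ordinal into the domain, resp.\ a given element of $S$ into the range, form dense sets; hence $S$ is well-orderable in $N$. Because in $M$ every set is a surjective image of some $S\times\eta$, composing such a surjection with $\bigcup G$ and with a bijection between $\lambda\times\eta$ and an ordinal shows that every set of $M$ is a surjective image of an ordinal in $N$, hence well-orderable there. Finally, any $y\in N$ equals $\dot y^G$ for some $\PP$-name $\dot y\in M$, and $y$ is the image of $\dot y$ under $\tup{p,\tau}\mapsto\tau^G$ (sending conditions outside $G$ to some fixed element of $y$, the empty case being trivial); since $\dot y$, as an element of $M$, is well-orderable in $N$, so is $y$. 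Thus every set of $N$ is well-orderable, i.e.\ $N\models\ZFC$.

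It remains to see that $M$ is $\kappa$-closed in $N$: if $\vec x\in N$ is a function with domain $\delta<\kappa$ and $\vec x(\alpha)\in M$ for all $\alpha<\delta$, then $\vec x\in M$. Fix a $\PP$-name $\dot{\vec x}$ and some $p^\ast\in G$ forcing ``$\dot{\vec x}$ is a function with domain $\check\delta$''. Call a condition $q\leq p^\ast$ \emph{good} if for every $\alpha<\delta$, either $q$ forces $\dot{\vec x}(\check\alpha)$ to equal some check-name, or no extension of $q$ does. Good conditions are downward closed, and they are dense below $p^\ast$: below any $q_0\leq p^\ast$ one builds in $M$---using $\DC_\delta$ for the choices at successor stages and $\lambda$-closure of $\PP$ at limit stages (legitimate as $\delta<\kappa\leq\lambda$)---a $\leq$-descending sequence $\tup{q_\alpha\mid\alpha\leq\delta}$ with $q_0$ as given, where $q_{\alpha+1}\leq q_\alpha$ forces $\dot{\vec x}(\check\alpha)$ equal to a check-name if some extension of $q_\alpha$ does, and $q_{\alpha+1}=q_\alpha$ otherwise; then $q_\delta$ is good. (Formally this is $\DC_\delta$ applied to the $\delta$-closed tree of initial approximations to such a sequence, which has no maximal node since every approximation extends.) So $G$ contains a good condition $p_\delta$. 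Now for each $\alpha<\delta$, since $\vec x(\alpha)\in M$ there is $r\in G$ forcing $\dot{\vec x}(\check\alpha)=\check{\vec x(\alpha)}$, and a common extension of $r$ and $p_\delta$ rules out the second alternative; hence $p_\delta$ forces $\dot{\vec x}(\check\alpha)=\check z_\alpha$ for the unique set $z_\alpha$ with that property, and necessarily $z_\alpha=\vec x(\alpha)$. Since $z_\alpha$ is definable from $p_\delta,\dot{\vec x},\alpha$ in $M$, the function $\tup{z_\alpha\mid\alpha<\delta}$ lies in $M$, and it equals $\vec x$.

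The delicate point is this last step. Because $M$ need not satisfy full $\DC$, the usual argument---a $\lambda$-closed forcing is ${<}\kappa$-distributive, so it adds no new short sequences---only yields that no new ${<}\kappa$-sequences of \emph{ordinals} are added, and one cannot recover the general case by well-ordering in $M$ the (set-sized) collections of possible coordinate values, since those need not be well-orderable without choice. The ``goodness'' device is designed to bypass this: it extracts, coordinate by coordinate and entirely inside $M$ using only $\DC_\delta$, a single candidate value for each $\vec x(\alpha)$, after which genericity forces the true value to agree with it.
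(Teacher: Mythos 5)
Your proof is correct and follows essentially the same route as the paper: use the $\SVC$ witness to define a sufficiently closed collapse forcing that yields $\AC$ in the extension, and use $\DC_{<\kappa}$ to see that this forcing adds no new ${<}\kappa$-sequences of ground-model elements. The only differences are cosmetic: you work with the surjection formulation of $\SVC$ (so you must re-derive $\AC$ in the extension by hand, which the paper gets for free from the formulation ``forcing a well-ordering of the witness forces $\AC$''), and you spell out in full, via your ``good conditions,'' the distributivity argument that the paper invokes as folklore.
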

\begin{proof}
Without loss of generality we can assume that $\kappa$ is the least such that $\DC_\kappa$ fails. From \cite[Theorem~8.1]{Jech:AC1973} it follows that $\kappa$ is regular. Recall that $\SVC$ can be restated as ``there exists a set $X$ such that forcing a well-ordering of $X$ forces the axiom of choice''. Since $\DC_{<\kappa}$ holds, we can force a well-ordering of $X$ of type $\kappa$ by initial segments. By $\DC_{<\kappa}$ this forcing is $\kappa$-closed and does not add ${<}\kappa$-sequences. Therefore $M$ is a $\kappa$-closed inner model of a model of $\ZFC$.
\end{proof}

$\SVC$ should not be necessary, but it is \textit{somewhat} necessary. On the one hand it is easy to construct a class-symmetric extension which is $\kappa$-closed, but does not satisfy $\SVC$ (e.g.\ the class extensions given in \cite{Karagila:2014}). On the other hand, if it is consistent (modulo large cardinal hypotheses) with $\ZF+\DC$ that all successor cardinals have cofinality $\omega_1$, or in a generalized Morris-style model satisfying $\DC$ (see \cite{Karagila:2018} for details),\footnote{Neither statements are known to be consistent with $\ZF+\DC$. We conjecture the latter is consistent.} then such a model cannot be extended to a model of $\ZFC$ without adding ordinals. In particular, this model is not $\aleph_1$-closed in a model of $\ZFC$.

Finally, we remark that it is quite easy to verify that a $\sigma$-closed forcing must preserve $\DC$. In a more general way, we can prove that a proper forcing cannot violate $\DC$. For a complete discussion on the topic of properness in $\ZF$, see the author's work with David Asper\'o in \cite{AsperoKaragila:2018}.
\section{Some Applications}
\subsection{Failures of \texorpdfstring{$\GCH$}{GCH} at limit cardinals below a supercompact cardinal}\label{app:apter}
Arthur Apter proved in \cite{Apter:2012} the following theorem:

\begin{theorem*}[Apter, Theorem~3]
Assume $V\models\ZFC+\GCH+\kappa$ is supercompact. Then there is a symmetric extension in which $\AC_\omega$ fails, $\kappa$ is a regular limit cardinal and supercompact, and $\GCH$ holds at a limit cardinal $\delta$ if and only if $\delta>\kappa$.
\end{theorem*}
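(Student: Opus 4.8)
The plan is to split the construction in two. First, by set-forcing over $V$, I would produce a model $W\models\ZFC$ in which $\kappa$ is supercompact, $\kappa$ is strongly inaccessible, $2^\delta\geq\delta^{++}$ holds for every limit cardinal $\delta\leq\kappa$, and $\GCH$ holds at every limit cardinal $\delta>\kappa$; second, I would pass from $W$ to its basic Cohen symmetric extension, which kills $\AC_\omega$ while leaving the whole cardinal structure and the relevant cardinal arithmetic intact. It is worth fixing the meaning of the conclusion first: in the final choiceless model ``$\GCH$ holds at $\delta$'' must be read as ``$\delta^{++}$ does not inject into $\power(\delta)$'', since once the model contains an infinite Dedekind-finite set of reals the powersets of infinite cardinals cease to be well-orderable and the naive reading of $\GCH$ is unavailable.

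Producing $W$ is the delicate half. For the \emph{regular} limit cardinals below $\kappa$ --- the inaccessibles, including the inaccessible fixed points of any Easton function --- and for $\kappa$ itself it suffices to throw in the corresponding $\Add(\delta,\delta^{++})$; but the limit cardinals $\delta<\kappa$ which are closure points of the continuum function --- club-many of these are singular --- force one to violate $\GCH$ at a singular cardinal, and this is where the full strength of the supercompact $\kappa$, reflected down to the measurable cardinals of high Mitchell order cofinal below it, is needed: a forcing below $\kappa$ of the type used to obtain failures of the Singular Cardinals Hypothesis --- indeed of $\GCH$ everywhere below a supercompact, in the style of Foreman and Woodin --- arranges $2^\delta\geq\delta^{++}$ at every limit $\delta<\kappa$ while preserving the supercompactness of $\kappa$. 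On top of it I force $2^\kappa=\kappa^{++}$ with $\Add(\kappa,\kappa^{++})$, which is $\kappa$-directed-closed and hence harmless, and force nothing above $\kappa$. The whole forcing has size $\kappa^{++}$, so $2^\delta=\delta^+$ is undisturbed for every $\delta\geq\kappa^{++}$ and in particular at every limit cardinal $>\kappa$; and $\kappa$ stays strongly inaccessible because each power set blown up below $\kappa$ remains $<\kappa$.

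Given such a $W$, let $\PP$ be the forcing that produced it and work with the symmetric system $\tup{\PP\times\Add(\omega,\omega),\sG,\sF}$, where $\sG=\set{\id}\times S$, $S$ the group of finite permutations of $\omega$ acting on the $\omega$ added Cohen reals $\tup{c_n\mid n<\omega}$, and $\sF$ is generated by the pointwise stabilizers $\fix(E)$ of finite $E\subseteq\omega$. Since $\PP$ carries only the trivial group, every name over it is hereditarily symmetric, so $W\subseteq M:=\HS^G\subseteq W[H]$, where $H$ is the $\Add(\omega,\omega)$-part of the generic. The set $A=\set{c_n\mid n<\omega}$ has a hereditarily symmetric name --- each $c_n$ being fixed by $\fix(\set{n})$ --- and is an infinite Dedekind-finite set of $M$ by the usual argument, so $M\models\neg\AC_\omega$. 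As $\Add(\omega,\omega)$ is small and c.c.c., $W[H]$ has the same cardinals, cofinalities and strongly inaccessible cardinals as $W$, and so, being sandwiched between them, does $M$; thus $\kappa$ is a regular limit cardinal of $M$. For a limit $\delta\leq\kappa$, an injection $\delta^{++}\hookrightarrow\power(\delta)$ already lies in $W\subseteq M$, so $\GCH$ fails at $\delta$ in $M$; for a limit $\delta>\kappa$, $\power(\delta)^M\subseteq\power(\delta)^{W[H]}$, a set which $W[H]$ well-orders in order type $\delta^+$, so an $M$-injection $\delta^{++}\hookrightarrow\power(\delta)^M$ would produce one of $\delta^{++}$ into $\delta^+$ in $W[H]$, which is impossible --- hence $\GCH$ holds at $\delta$ in $M$.

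\textbf{The two main obstacles} are: (i) the construction of $W$ itself --- forcing $\GCH$ to fail at the singular closure-point cardinals below $\kappa$ while keeping $\kappa$ supercompact is substantial, and it is here that the large-cardinal hypothesis does its real work and that a ``simple solution'' has to be measured against Apter's original argument; and (ii) checking that $\kappa$ is supercompact \emph{in $M$}, and not just in $W$ or $W[H]$. For (ii) the subtlety is that $\Add(\omega,\omega)$, though small, adds new members to $\power_\kappa(\lambda)$, so a supercompactness measure of $W$ is no longer a measure from the standpoint of $M$; one must take the L\'evy--Solovay extension of such a measure --- equivalently, restrict to $M$ the lift $j\colon W[H]\to N[H]$ of the supercompactness embedding --- and verify that a suitable extended measure carries a hereditarily symmetric name, so that it belongs to $M$. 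Everything else --- the transfer of the $\GCH$ pattern and of all cardinal facts from $W$ to $M$, and the failure of $\AC_\omega$ --- is then routine.
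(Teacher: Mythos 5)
Your second step---killing $\AC_\omega$ by a symmetric finite-support product of Cohen reals over a prepared $\ZFC$ model $W$, and transferring the cardinal arithmetic through the small c.c.c.\ forcing---is fine as far as it goes, and your worry (ii) about getting the supercompactness measure into the symmetric submodel is the right one and is surmountable. The genuine gap is step 1. You treat as a black box the existence of a $\ZFC$ model $W$ in which $\kappa$ is still supercompact, $\GCH$ holds at limit cardinals above $\kappa$, and $2^\delta\geq\delta^{++}$ for \emph{every} limit cardinal $\delta\leq\kappa$---including every cardinal of the form $\mu^{+\omega}$ and every singular strong limit below $\kappa$. This is not delivered by the Foreman--Woodin machinery you invoke: that construction makes $\GCH$ fail everywhere but is not known to preserve the supercompactness of $\kappa$ (nor does it leave $\GCH$ intact above), and arranging the failure at \emph{all} limit cardinals simultaneously, below a cardinal that remains supercompact, is precisely the open difficulty that Apter's argument is engineered to sidestep. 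Indeed, if your $W$ existed, the conclusion ``$\AC_\omega$ fails'' in Apter's theorem would be pointless ($W$ itself would witness the $\GCH$ pattern with full choice), and Apter's closing question---answered in this paper only up to $\DC_{<\kappa}$---would have the trivial answer ``full $\AC$''. So the step you label as merely ``delicate'' is in fact the entire theorem, and it is claimed rather than proved.

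The actual construction is structured quite differently, and the difference is instructive. One prepares $V$ so that $\kappa$ is indestructibly supercompact and there is a club $C=\{\kappa_i\mid i<\kappa\}$ with $\min C=\omega$, whose \emph{successor} points are inaccessible, and with $2^\delta=2^{\delta^+}=\delta^{++}$ only for $\delta\in C$; the in-between limit cardinals, where $\GCH$ survives, are then \emph{removed} by the Easton-support product of the collapses $\Col(\kappa_i^{++},{<}\kappa_{i+1})$, so that the limit cardinals of the final model below $\kappa$ are exactly the $\kappa_i$ for limit $i$, where the preparation already ensures $\delta^{++}$ injects into $\power(\delta)$. Crucially, this collapse product is taken as a \emph{symmetric} extension (with the filter generated by the groups $\fix(\alpha)$): the symmetrization is what allows $\kappa$ to remain supercompact (in the ultrafilter sense, via the argument cited from Hayut--Karagila), and the failure of $\AC_\omega$ is a byproduct of that same symmetrization, not a feature added afterwards by adjoining Cohen reals. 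In short: rather than forcing the $\GCH$ pattern onto all limit cardinals in $\ZFC$ and then breaking choice gratuitously, Apter breaks choice in order to be able to realize the pattern at all.
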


Of course, there are some concessions to be made. Supercompactness here is meant in the sense of ultrafilters, which is weaker than the sense of embedding (e.g.\ $\omega_1$ can be supercompact in the sense used by Apter, but it cannot be the critical point of an elementary embedding). In addition $\GCH$ is weakened to mean that there is no injection from $\delta^{++}$ into $\mathcal P(\delta)$, this is because of the classical theorem that $\GCH$ (in its standard formulations) implies the axiom of choice.

At the end Apter asks whether or not this result can be improved by having some weak form of the axiom of choice hold. Amitayu Banerjee pointed out that \autoref{lemma:chain-condition} gives a simple answer based on Apter's original construction.

\begin{theorem}
Assume $V\models\ZFC+\GCH+\kappa$ is supercompact. Then there is a symmetric extension in which $\DC_{<\kappa}$ holds, $\kappa$ is a regular limit cardinal and supercompact, and $\GCH$ holds for a limit cardinal $\delta$ if and only if $\delta>\kappa$.
\end{theorem}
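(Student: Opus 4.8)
The plan is to re-run Apter's construction, changing only the filter of subgroups: where Apter works with finite supports --- which is precisely what makes $\AC_\omega$ fail --- I would instead use supports of size ${<}\kappa$, and then $\DC_{<\kappa}$ will follow at once from \autoref{lemma:chain-condition}. Write Apter's symmetric system as $\tup{\PP,\sG,\sF_0}$, where $\PP$ is the Easton-support product, below $\kappa$, of the partial orders blowing up the relevant power sets; $\sG$ is the product of the ``local'' groups permuting, coordinatewise, the subsets that have been added; and $\sF_0$ is the normal filter generated by the pointwise stabilizers $\fix(E)$ for $E$ a finite subset of the index set $I$ of $\PP$.

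Two observations are all that is needed. First, because $V\models\GCH$ and $\kappa$, being supercompact, is in particular Mahlo, the Easton product $\PP$ has the $\kappa$-chain condition; this is the standard $\Delta$-system argument for Easton products, thinning an alleged antichain of size $\kappa$ along the stationary set of inaccessibles below $\kappa$. Secondly, if $\sF$ denotes the normal filter of subgroups of $\sG$ generated by $\{\fix(E)\mid E\in[I]^{<\kappa}\}$, then $\sF$ is $\kappa$-complete: the intersection of fewer than $\kappa$ of its generators contains $\fix$ of a union of fewer than $\kappa$ sets each of size ${<}\kappa$, and such a union has size ${<}\kappa$ since $\kappa$ is regular. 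Note also that $\sF_0\subseteq\sF$, so for a fixed $V$-generic $G$ the two symmetric models $M_0=\HS_{\sF_0}^G$ and $M=\HS_{\sF}^G$ satisfy $V\subseteq M_0\subseteq M\subseteq V[G]$.

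Now \autoref{lemma:chain-condition}, applied to $\tup{\PP,\sG,\sF}$ with $\lambda=\kappa$, gives $\forces^{\HS}\DC_{<\kappa}$, so $M\models\DC_{<\kappa}$. It remains to transfer the rest of Apter's conclusion from $M_0$ to $M$. Preservation of all cardinals and cofinalities up to and including $\kappa$ is a property of $\PP$ alone (the $\kappa$-c.c.\ together with Easton's lemma) and is therefore inherited by $M$, so $\kappa$ remains a regular limit cardinal there. The $\GCH$ pattern is then squeezed between $M_0$ and $V[G]$: for a limit cardinal $\delta<\kappa$ the required injection $\delta^{++}\hookrightarrow\power(\delta)$ already lives in $M_0\subseteq M$, while for a limit cardinal $\delta>\kappa$ the \emph{absence} of such an injection holds in $V[G]$ --- where $\AC$ holds, $V\models\GCH$, and $\PP$ adds nothing relevant above $\kappa$ --- and hence holds in $M\subseteq V[G]$; both halves are instances of upward, respectively downward, absoluteness between transitive models sharing the same cardinals. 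Finally, the supercompactness of $\kappa$ (in the ultrafilter sense) in $M$ should go through by Apter's own argument: the fine normal $\kappa$-complete ultrafilters on $\power_\kappa(\lambda)$ are built from supercompactness measures of $V$ pushed through the $\kappa$-c.c.\ forcing $\PP$, via names that only ever mention ${<}\kappa$ coordinates, so the verification does not depend on whether the ambient filter is $\sF_0$ or $\sF$.

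That last step is the one I expect to cause trouble: enlarging the filter enlarges the symmetric model, so $M$ contains more subsets of $\power_\kappa(\lambda)$ than $M_0$ does, and one must check that the witnesses to supercompactness are still ``ultra'' with respect to all of them. The resolution I would pursue is to compute the ultrafilter inside $M$ directly, from a $\PP$-name assembled out of the $V$-side supercompactness embeddings and a master condition --- a name that is $\sF$-symmetric precisely because it refers to only boundedly much of the generic --- rather than inheriting a measure from $M_0$ or restricting one from $V[G]$. Everything else in Apter's proof should transfer with no essential change; the genuinely new input is just the two observations above together with \autoref{lemma:chain-condition}.
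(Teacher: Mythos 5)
Your overall strategy is exactly the one the paper takes: keep Apter's forcing and automorphism group, enlarge the filter to the $\kappa$-complete one generated by the ${<}\kappa$-supported stabilizers (equivalently by $\fix(\alpha)$ for $\alpha<\kappa$, since a ${<}\kappa$-sized subset of the index set $\kappa$ is bounded), and invoke \autoref{lemma:chain-condition} together with the $\kappa$-c.c.\ of the Easton product. For supercompactness the paper simply cites Lemma~3.3 of Hayut--Karagila, which is precisely the ``build the measure from a name touching only boundedly much of the generic'' argument you sketch; so the step you flag as risky is real but already settled in the literature, and your transfer of the injections from $M_0\subseteq M$ is a legitimate alternative to the paper's direct computation in $V[G]$ via $V_\kappa^M=V_\kappa^{V[G]}$.

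Two corrections are needed. First, you have misremembered Apter's forcing: $\PP$ is the Easton-support product of the collapses $\Col(\kappa_i^{++},{<}\kappa_{i+1})$ along a club $C=\{\kappa_i\mid i<\kappa\}$ in a prepared model where $2^{\delta}=2^{\delta^+}=\delta^{++}$ already holds for $\delta\in C$; the $\GCH$ failures below $\kappa$ come from the preparation, not from ``blowing up power sets.'' Consequently your claim that $\PP$ preserves ``all cardinals and cofinalities up to and including $\kappa$'' is false --- the collapses are there precisely to destroy the cardinals in each interval $(\kappa_i^{++},\kappa_{i+1})$, so that the limit cardinals of the extension below $\kappa$ are exactly the $\kappa_i$ for limit $i$. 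What one actually argues is that $\kappa$ survives (by the $\kappa$-c.c.) and that $2^{\kappa_i}=\kappa_i^{++}$ is preserved because the tail of the product is $\kappa_i^{++}$-closed while the initial part is $\kappa_i^{+}$-c.c. Second, you omit the case $\delta=\kappa$: the theorem requires $\GCH$ to fail at $\kappa$ itself, and since $\power(\kappa)$ is not well-orderable in the symmetric extension one must settle for exhibiting an injection of $\kappa^{++}$ into $\power(\kappa)$, which follows by the same closure and chain-condition considerations as for the $\kappa_i$.
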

\begin{proof}
Apter's proof begins by preparing $V$ so that $\kappa$ is indestructibly supercompact and that there is a club $C\subseteq\kappa$ such that $\min C=\omega$ and the successor points are inaccessible, such that for all $\delta\in C$, $2^\delta=2^{\delta^+}=\delta^{++}$.

Let $\tup{\kappa_i\mid i<\kappa}$ be a continuous enumeration of $C$, then $\PP$ is the Easton support product of $\Col(\kappa_i^{++},{<}\kappa_{i+1})$. We take $\sG$ to be the Easton support product of the automorphism groups of each collapse, and $\sF$ is the filter generated by the groups of the form $\fix(\alpha)=\{\pi\in\prod_{i\in C}\aut(\Col(\kappa_i^{++},{<}\kappa_{i+1}))\mid\pi\restriction\alpha=\id\}$. Namely, the filter is generated by groups which concentrate on only applying permutations above some fixed initial segment. Let $G$ be a $V$-generic filter for $\PP$ and let $M$ denote the symmetric extension.

Easily, $\sF$ is $\kappa$-complete, and the Easton product is $\kappa$-c.c., so by \autoref{lemma:chain-condition} $\DC_{<\kappa}$ holds, and by Lemma~3.3 in \cite{Hayut-Karagila:2018} $\kappa$ remains supercompact. Since $\GCH$ held in $V$ above $\kappa$, and $\PP\subseteq V_\kappa$, it follows that for any limit cardinal $\delta>\kappa$, there is no injection from $\delta^{++}$ into $\mathcal P(\delta)$, since there is no such injection in $V[G]$, which agree with $V$ on cardinals above $\kappa$.

It remains to show that if $\delta\leq\kappa$ is a limit cardinal, then $\delta^{++}$ can be injected into $\mathcal P(\delta)$. For this note that $V_\kappa^M=V_\kappa^{V[G]}$, so it is enough to prove this in $V[G]$.

First, note that if $\delta<\kappa$ is a limit cardinal in $M$ then there is some limit ordinal $i<\kappa$, such that $\delta=\kappa_i$. Next, note that the Easton product above $i$ is $\delta^{++}$-closed, so it does not add subsets to $\delta$ nor it collapses $\delta^{++}$; and the product up to $i$ is $\delta^{+}$-c.c., so it does not collapse $\delta^{++}$ either. 

Finally, the same holds for $\kappa$ itself, although in $M$ there is no well-ordering of $\mathcal P(\kappa)$, so we have to settle for the fact that $\kappa^{++}$ injects into $\mathcal P(\kappa)$ by the same arguments as above.
\end{proof}

\subsection{Sets of reals and Dependent Choice} In recent times, there is a renewed interest in many ``irregularity properties'' of sets of reals consistent with the failure of the axiom of choice already at that level. Namely, the existence of Luzin sets, Hamel bases, etc., in models where $\RR$ cannot be well-ordered. The natural question after each resolve is whether or not $\DC$ can be added. Perhaps unsurprisingly, the answer is almost always positive. Much of this work has been done in \cite{BCSWY:2018} by Brendle, Castiblanco, Schindler, Wu, and Yu. We will prove a simpler result of the same flavor, using simplified arguments. For simplicity, all the results in this part assume $V=L$.

Recall that a \textit{Luzin set} is an uncountable set of reals whose intersection with every nowhere dense set is countable. It is a classic theorem that the Continuum Hypothesis implies the existence of a Luzin set, as well as forcing with $\Add(\omega,\omega_1)$ adds a Luzin set.\footnote{Or more generally, $\Add(\omega,2^{\aleph_0})$.}

Taking $\PP=\Add(\omega,\omega_1)$ with the permutation group of $\omega_1$ acting on $\PP$ by $\pi p(\pi\alpha,n)=p(\alpha,n)$, and $\sF$ is generated by $\fix(\alpha)$ for $\alpha<\omega_1$, where $\fix(\alpha)$ is $\{\pi\mid\pi\restriction\alpha=\id\}$. This symmetric system satisfies \autoref{lemma:chain-condition}, and therefore $\DC$ holds in the extension. 

Moreover, by a standard argument, the set of Cohen generics $A$ is in the model, but its enumeration is not. In particular, $\RR$ cannot be well-ordered there. Finally, $A$ is of course uncountable. And given any nowhere dense set, $F$, let $x$ be a code for $F$,\footnote{Since $\DC$ holds every Borel set has a code.} by c.c.c.\ there is a countable part of $\PP$ where $x$ was added, but then any $a\in A$ outside that part is Cohen generic over $L[x]$, and is therefore not in $F$. So $A\cap F$ is countable.

Replacing the Cohen reals by Sacks reals, and the finite support product by a countable support product, we lose the c.c.c.\ property, but we the forcing is still proper, as shown by Baumgartner in \cite{Baumgartner:1985}. By \autoref{lemma:proper} is enough to obtain $\DC$.\footnote{One can also note that the product of $\aleph_2$ copies of Sacks reals, over a model of $\CH$, will satisfy $\aleph_2$-c.c., so by taking the suitable construction just \autoref{lemma:chain-condition} provides us with $\DC_{\omega_1}$.} In this model we also have that every real was added by a countable part of the product, although in this case this is due to homogeneity rather than chain condition. In \cite{BCSWY:2018}, the construction goes on to force a Burstein set, which is a Hamel basis with an addition property of being a Bernstein set. This second forcing is $\sigma$-closed, so it preserves $\DC$.

This last part raises an interesting question. In \cite{BSWY:2018} the authors show that in Cohen's model there is a Hamel basis for $\RR$ over $\QQ$. Cohen's model is famous of having a Dedekind-finite set of reals, and therefore $\DC$ fails quite badly. However, it is also very different from Feferman's construction of a model satisfying $V=L(\RR)$ where the Boolean Prime Ideal theorem fails, in that the set of Cohen reals is in Cohen's model but not in Feferman's model. This is important because the proof in \cite{BSWY:2018} relies on this very fact. In \cite{BCSWY:2018} the construction goes through $L(\RR)$, where the set of Sacks reals is not present.

\begin{question}
Let $M$ be the symmetric extension obtained by forcing with a countable support product of Sacks reals of length $\omega_1$ as described above. Is there a Hamel basis for $\RR$ over $\QQ$ in $M$?
\end{question}

\subsection{Generic structures}
Wilfrid Hodges' influential paper \cite{Hodges:1974} presents six constructions of rings that have seemingly impossible properties, proving once more the necessity of the axiom of choice in the study of algebraic structures. His constructions rely on Lemma~3 called ``Removal of subsets'' in the paper which allows the transfer of a countable structure with certain properties to a model of $\ZF$ where the structure has only ``a few subsets''. The lemma then proved in \cite[Lemma~3.7]{Hodges:1976}. The proof goes through a more general construction, and then focuses on the case where $\kappa=\omega$, however by replacing $\omega$ by $\kappa$ (and finite by ${<}\kappa$) in the definitions relevant for the Removal of subsets, one immediately gets the consistency of $\DC_{<\kappa}$ with the modified lemma.

We extend this type of lemma to allow for $\DC_{<\kappa}$ to hold, if one assumes a little bit more. For the remainder of this section, $\cL$ is a fixed first-order language, and $\kappa$ is a fixed regular cardinal. 

For a $\cL$-structure $M$, we say that $X\subseteq M^n$ is \textit{$\kappa$-supported} if there exists $Y\subseteq M$ such that $|Y|<\kappa$, and $\pi$ is any automorphism which fixes $Y$ pointwise, then $X=\{\pi\vec x\mid\vec x\in X\}$. Similarly, a sequence of relations is $\kappa$-supported if it is uniformly $\kappa$-supported.

Finally, we say that $M$ is \textit{$\kappa$-homogeneous} if whenever $A\subseteq M$ and $|A|<\kappa$, if $B\subseteq M$ such that $f\colon A\to B$ is an isomorphism as $\cL$-substructures of $M$, then $f$ can be extended to an automorphism of $M$. It is well-known that if $M$ is $\kappa$-homogeneous, $A\equiv_N B$ for some $N\in[M]^{<\kappa}$, then there is an automorphism mapping $A$ to $B$ which fixes $N$ pointwise.

\begin{theorem}
Suppose that $M$ is a $\kappa$-homogeneous $\cL$-structure. There exists a symmetric extension $W\subseteq V[G]$ in which there is an $\cL$-structure such that $A\cong M$ in $V[G]$, but in $W$ the only subsets of $A$ are those which are $\kappa$-supported.
\end{theorem}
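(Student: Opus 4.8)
The plan is to run the classical ``removal of subsets'' construction in a two-level form, so that the atoms of $A$ carry no internally definable structure.

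\medskip\noindent\emph{The symmetric system.} If $|M|<\kappa$ the statement about subsets is trivial (take $Y=A$), so assume $|M|\geq\kappa$. Let $\PP$ be the set of partial functions $p\colon M\times\kappa\times\kappa\to 2$ with $|\dom p|<\kappa$, ordered by reverse inclusion; since $\kappa$ is regular $\PP$ is $\kappa$-closed. Put $\dot c_{i,\xi}=\{\tup{p,\check\eta}\mid\eta<\kappa,\ p(i,\xi,\eta)=1\}$ for $i\in M$, $\xi<\kappa$, then $\dot a_i=\{\dot c_{i,\xi}\mid\xi<\kappa\}^\bullet$, $\dot A=\{\dot a_i\mid i\in M\}^\bullet$, and transport the language: for a $k$-ary $R$ of $\cL$ let $\dot R^{\dot A}=\{\tup{\dot a_{i_1},\dots,\dot a_{i_k}}^\bullet\mid\tup{i_1,\dots,i_k}\in R^M\}^\bullet$. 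Let $\sG$ be the group of all $\pi=\tup{\pi_0,\tup{\rho_i}_{i\in M}}$ with $\pi_0\in\aut(M)$ and each $\rho_i\in\mathrm{Sym}(\kappa)$, acting by $(\pi p)(\pi_0(i),\rho_i(\xi),\eta)=p(i,\xi,\eta)$; then $\pi\dot c_{i,\xi}=\dot c_{\pi_0(i),\rho_i(\xi)}$, hence $\pi\dot a_i=\dot a_{\pi_0(i)}$, so $\pi$ fixes $\dot A$ and every $\dot R^{\dot A}$. Let $\sF$ be generated by $\fix(E)=\{\pi\in\sG\mid\pi_0(i)=i\text{ and }\rho_i(\xi)=\xi\text{ whenever }\tup{i,\xi}\in E\}$ for $E\in[M\times\kappa]^{<\kappa}$; this is a normal, $\kappa$-complete filter. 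One checks directly that $\dot c_{i,\xi},\dot a_i,\dot A,\dot R^{\dot A}$ are all in $\HS$, so $A:=\dot A^G$ carries an $\cL$-structure lying in $W:=\HS^G$, and in $V[G]$ the map $i\mapsto a_i$ is an isomorphism of $M$ onto $A$ (injective since the $c_{i,\xi}$ are pairwise distinct by genericity, relation-preserving by construction). Finally, as $\PP$ is $\kappa$-closed and $\sF$ is $\kappa$-complete, \autoref{lemma:closure} gives $W\models\DC_{<\kappa}$.

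\medskip\noindent\emph{Every subset of $A$ in $W$ is $\kappa$-supported.} Let $X\subseteq A$ lie in $W$ with name $\dot X\in\HS$. Replacing $\dot X$ by $\{\tup{p,\dot a_i}\mid p\forces\dot a_i\in\dot X\}$ — which names the same set and does not shrink $\sym(\dot X)$ — we may assume every name appearing in $\dot X$ is one of the $\dot a_i$. Fix $E\in[M\times\kappa]^{<\kappa}$ with $\fix(E)\subseteq\sym(\dot X)$ and let $Y_0$ be the set of first coordinates in $E$; I claim $\widehat Y_0:=\{a_i\mid i\in Y_0\}\in[A]^{<\kappa}$ supports $X$. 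If not, there is an automorphism $\sigma$ of the $\cL$-structure $A$ fixing $\widehat Y_0$ pointwise with, say, $a_j\in X$, $\sigma(a_{j'})=a_j$, and $a_{j'}\notin X$; the induced partial automorphism of $M$ shows $j'\equiv_{Y_0}j$ (a first-order fact about $M$, hence true in $V$). Pick $r\in G$ forcing $\dot a_j\in\dot X$ and $\dot a_{j'}\notin\dot X$. By $\kappa$-homogeneity of $M$, extend the partial isomorphism $\id_{Y_0}\cup\{j\mapsto j'\}$ to an automorphism $\pi_0$ of $M$; for each column $i\notin Y_0$ occurring in $\dom r$, let $\rho_i\in\mathrm{Sym}(\kappa)$ carry the ${<}\kappa$ internal coordinates of $r$ in column $i$ off those of $r$ in column $\pi_0(i)$, and set $\rho_i=\id$ otherwise. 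Then $\pi=\tup{\pi_0,\tup{\rho_i}}\in\fix(E)\subseteq\sym(\dot X)$, and $\pi r$ is compatible with $r$ (they agree on their common domain, which lies in the $Y_0$-columns, since all other columns have been scattered). But by the Symmetry Lemma $\pi r\forces\dot a_{j'}\in\dot X$, while $r\forces\dot a_{j'}\notin\dot X$, so $r\cup\pi r$ would force $\dot a_{j'}\in\dot X\wedge\dot a_{j'}\notin\dot X$ — absurd. Hence $X$ is $\kappa$-supported. Conversely, given $X\subseteq A$ that is $\kappa$-supported by $\{a_i\mid i\in Y_0\}$ (with $Y_0\in[M]^{<\kappa}\cap V$, since $\PP$ adds no new ${<}\kappa$-sequences of ground-model elements), one shows that the name $\{\tup{p,\dot a_i}\mid p\forces\dot a_i\in\dot X_0\}$, for any name $\dot X_0$ of $X$, is in $\HS$ — verifying $\fix(Y_0\times\{0\})$ lies in its symmetry group by the same compatibility-of-conditions device — so that $X\in W$; thus $W$ has exactly the $\kappa$-supported subsets of $A$.

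\medskip\noindent\emph{Where the difficulty lies.} The crux, used in both directions, is the passage from symmetry of a $\PP$-name (``$\pi\dot X=\dot X$'') to structural support of the set it names (``$\sigma[X]=X$ for automorphisms $\sigma$ of $A$''): an automorphism of $\PP$ is not an automorphism of $V[G]$, so this equivalence must be established by the density/compatibility argument above, and it is precisely here that both hypotheses are consumed — $\kappa$-homogeneity manufactures, on demand, a ground-model automorphism of $M$ realizing a prescribed partial map, and the internal scramblings $\mathrm{Sym}(\kappa)$ supply the remaining room needed to make the transported condition compatible with the original one. This is also the reason for the two-level forcing: with only $\aut(M)$ permuting indices of generic subsets of $\kappa$, the model $W$ would still contain subsets of $A$ defined from a single internal coordinate of the generics (in the Cohen-real case, an infinite and co-infinite subset of $A$), and such sets are not $\kappa$-supported.
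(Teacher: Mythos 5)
Your proof is correct and follows essentially the same route as the paper's: the same forcing $\Add(\kappa,M\times\kappa)$, the same wreath-product automorphism group, an equivalent filter of $\fix$-groups, and the same key step of replacing a possibly generic automorphism of $A$ by a ground-model automorphism of $M$ (via absoluteness of the relevant substructure isomorphism plus $\kappa$-homogeneity) and then scattering the internal coordinates so that $\pi r$ is compatible with $r$. The converse direction you sketch at the end (that every $\kappa$-supported set lies in $W$) is not required by the statement and is only gestured at, but the direction the theorem actually asserts is fully and correctly argued.
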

\begin{proof}
Let $\PP=\Add(\kappa,M\times\kappa)$, we define $\sG$ to be $\aut(M)\wr S_\kappa$, namely the wreath product of the automorphism group of $M$ with the permutation group of $\kappa$, which is itself a permutation group of $M\times\kappa$. A permutation $\pi\in\sG$ is made from an automorphism $\pi^*\in\aut(M)$, and for each $m\in M$ a permutation of $\kappa$, denoted by $\pi_m$, and $\pi(m,\alpha)=(\pi^*(m),\pi_m(\alpha))$. We define the action of $\sG$ on $\PP$ in the standard way, \[\pi p(\pi^*(m),\pi_m(\alpha),\beta)=p(m,\alpha,\beta).\]
Finally, for $N\subseteq M$ and $E\subseteq\kappa$ we define \[\fix(N,E)=\{\pi\in\sG\mid \pi^*\restriction N=\id\land\forall n\in N:\pi_n\restriction E=\id\},\] and $\sF$ is filter generated by $\{\fix(N,E)\mid N\in[M]^{<\kappa}, E\in[\kappa]^{<\kappa}\}$.

Indeed, it is not hard to see that the conditions of \autoref{lemma:closure} and $\DC_{<\kappa}$ must hold in the symmetric extension given by this symmetric system. Let $G$ be a $V$-generic filter for $\PP$, and let $W$ denote the symmetric extension.

For $m\in M$ and $\alpha<\kappa$ let $\dot x_{m,\alpha}$ be the name $\{\tup{p,\check\beta}\mid p(m,\alpha,\beta)=1\}$, $\dot a_m$ is the name $\{\dot x_{m,\alpha}\mid \alpha<\kappa\}^\bullet$ and $\dot A=\{\dot a_m\mid m\in M\}^\bullet$. Standard arguments show that for all $\pi\in\sG$, $\pi\dot x_{m,\alpha}=\dot x_{\pi^*(m),\pi_m(\alpha)}$ and $\pi\dot a_m=\dot a_{\pi^*(m)}$, and so $\pi\dot A=\dot A$. Therefore all these names are symmetric.

Moreover, since the $M$-part of $\pi\in\sG$ is an automorphism, if $R$ is a symbol in $\cL$, then $\{\dot a_{\vec m}\mid\vec m\in R^M\}^\bullet$ is symmetric, where $\dot a_{\vec m}=\tup{\dot a_{m_i}\mid \vec m=\tup{m_i\mid i<\alpha}}^\bullet$. In particular in $W$ there is a natural way of interpreting $A$ as an $\cL$-structure, and clearly in $V[G]$ it holds that $A\cong M$ by $m\mapsto a_m$.

It remains to show that if $B\subseteq A$ is in $W$, then $B$ is $[M]^{<\kappa}$-supported. Let $\dot B$ be a name for $B$ in $\HS$ and $\fix(N,E)\subseteq\sym(\dot B)$. If $p\forces``\dot B\text{ is not }\kappa\text{-suppported}"$, then in particular $N$ itself is not a support for $B$, then there is an automorphism $\pi^*$ which fixes $N$ pointwise and moves an element $B$ outside of $B$ itself. The problem is that this automorphism might be generic. 

However, let $a_m\in B$ and $a_{m'}\notin B$ such that there is such $\sigma^*(a_m)=a_{m'}$. In particular $m$ and $m'$ have the same type over $N$. Since this statement is absolute to $V$, we can therefore assume without loss of generality that $\sigma^*\in V$, and therefore there is a suitable $\pi\in\sG$ for which $\pi^*=\sigma^*$.

Moreover, we can assume that $\pi_a$ and $\pi_b$ are such that $\pi p$ is compatible with $p$, simply by ensuring the domains on the $a$ and $b$ coordinates of $p$ become disjoint. Therefore, $\pi p\forces\dot a_{m'}\in\dot B$, but since $p$ and $\pi p$ are compatible, this is impossible.
\end{proof}

We draw some easy corollaries. The first is that $\kappa$-amorphous sets are consistent with $\DC_{<\kappa}$, where a set is $\kappa$-amorphous if it cannot be written as a union of two subsets neither of which is of size ${<}\kappa$.

\begin{corollary}
It is consistent with $\DC_{<\kappa}$ that there exists a set whose cardinality is not ${<}\kappa$, but every subset is either of size ${<}\kappa$ or its complement is of size ${<}\kappa$.
\end{corollary}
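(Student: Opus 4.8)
The plan is to invoke the preceding theorem with the most featureless structure possible. I would take $\cL$ to be the empty language and let $M$ be a set of cardinality $\kappa$ carrying no structure. Then every bijection between subsets of $M$ is an isomorphism of $\cL$-substructures, and since $\kappa$ is regular, deleting fewer than $\kappa$ points from $M$ leaves a set of size $\kappa$ on both sides; so any partial injection of $M$ with domain of size ${<}\kappa$ extends to a permutation of $M$. Hence $M$ is $\kappa$-homogeneous, the theorem applies, and I obtain a symmetric extension $W\subseteq V[G]$ in which $\DC_{<\kappa}$ holds, equipped with an $\cL$-structure $A$ such that $A\cong M$ in $V[G]$ and such that in $W$ the subsets of $A$ are exactly the $\kappa$-supported ones.

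Next I would pin down the $\kappa$-supported subsets of $A$. Since $\cL$ is empty, the automorphisms of $A$ are simply the permutations of its underlying set. If $X\subseteq A$ has $|X|<\kappa$ then $X$ is supported by $Y=X$; if $|A\setminus X|<\kappa$ then $X$ is supported by $Y=A\setminus X$, since any permutation fixing $A\setminus X$ pointwise must restrict to a permutation of $X$. Conversely, suppose $X$ is supported by some $Y$ with $|Y|<\kappa$ while both $X\setminus Y$ and $(A\setminus X)\setminus Y$ are non-empty; choosing $a\in X\setminus Y$ and $b\in(A\setminus X)\setminus Y$, the transposition $(a\,b)$ is an automorphism of $A$ lying in $W$ which fixes $Y$ pointwise yet sends $a\in X$ to $b\notin X$, a contradiction. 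So $X\subseteq Y$ or $A\setminus X\subseteq Y$, giving $|X|<\kappa$ or $|A\setminus X|<\kappa$. Thus in $W$ every subset of $A$ is of size ${<}\kappa$ or has complement of size ${<}\kappa$, which is precisely the $\kappa$-amorphousness condition.

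Finally I would check that $A$ is not of size ${<}\kappa$ in $W$, which is the one place calling for care. Since $\PP=\Add(\kappa,M\times\kappa)$ is $\kappa$-closed it adds no new ${<}\kappa$-sequences of ordinals, hence preserves all cardinals $\leq\kappa$; in particular $\kappa$ is still a cardinal in $V[G]$ and $|A|^{V[G]}=|M|^{V[G]}=\kappa$. As $W\subseteq V[G]$, an injection of $A$ into an ordinal below $\kappa$ lying in $W$ would already witness $|A|<\kappa$ in $V[G]$, which fails, so $|A|$ is not ${<}\kappa$ in $W$. Combining this with the previous paragraph, $A$ is a witness in $W$ to the statement of the corollary, and $\DC_{<\kappa}$ holds there. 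I expect the genuinely delicate point to be this last cardinal-preservation observation guaranteeing $|A|\not<\kappa$; the characterisation of the $\kappa$-supported subsets is the routine transposition argument.
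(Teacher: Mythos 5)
Your proposal is correct and is essentially the argument the paper intends: the corollary is stated as an immediate consequence of the generic structures theorem, obtained by taking $\cL$ empty and $M$ a structureless set of size $\kappa$, so that the $\kappa$-supported subsets are exactly those of size ${<}\kappa$ or co-size ${<}\kappa$. Your transposition argument for the characterisation of supports and your $\kappa$-closure check that $|A|\not<\kappa$ in $W$ are exactly the routine verifications being elided.
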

The next corollary was proved by the author in \cite{Karagila:2012}.
\begin{corollary}
It is consistent with $\DC_{<\kappa}$ that there is a vector space over any fixed field which is not generated by ${<}\kappa$ vectors, but any proper subspace has dimension ${<}\kappa$.
\end{corollary}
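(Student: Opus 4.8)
The plan is to apply the preceding theorem to a suitably chosen structure. Fix the field, call it $\mathbb F$ and assume $\mathbb F\in V$, let $\cL$ be the first-order language of $\mathbb F$-vector spaces (the constant $0$, the binary $+$, and a unary function symbol for scalar multiplication by each element of $\mathbb F$), and take $M$ to be an $\mathbb F$-vector space of dimension $\kappa$. First I would check that $M$ is $\kappa$-homogeneous: a partial isomorphism $f\colon A\to B$ between $\cL$-substructures of $M$ generated by $<\kappa$ elements is just a linear isomorphism between subspaces of dimension $<\kappa$, and writing $M=A\oplus A'=B\oplus B'$ the complements $A',B'$ both have dimension $\kappa$ since $\kappa$ is regular, so any linear bijection $A'\to B'$ may be added to $f$ to extend it to an automorphism of $M$. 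The theorem then produces a symmetric extension $W\subseteq V[G]$ with an $\cL$-structure $A$ --- an $\mathbb F$-vector space --- such that $A\cong M$ in $V[G]$ while in $W$ every subset of $A$ is $\kappa$-supported. Along the way I would record the routine facts that the vector space operations on $A$ agree whether computed in $W$ or in $V[G]$, that the span of a set of vectors is the same in $W$ and in $V[G]$ (being the set of finite linear combinations), and that although the isomorphism $m\mapsto a_m$ onto $A$ witnessed in $V[G]$ does \emph{not} belong to $W$, for any $N\in[M]^{<\kappa}$ (computed in $V$) the family $(a_n)_{n\in N}$ and hence the subspace $A_N:=\{a_m\mid m\in\operatorname{span}(N)\}$ do belong to $W$, their canonical names being fixed by $\fix(N,E)\in\sF$.

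That $A$ is not generated by $<\kappa$ vectors in $W$ is immediate: if $S\in W$ and $|S|<\kappa$, then $\operatorname{span}(S)$ is the same subspace whether computed in $W$ or in $V[G]$, and in $V[G]$ its dimension is $\le|S|<\kappa<\dim A$, so it is a proper subspace; thus $S$ already fails to generate $A$ in $W$.

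The substantive step is to show that every proper subspace $B\subsetneq A$ lying in $W$ has dimension $<\kappa$. Unwinding the proof of the theorem, being $\kappa$-supported means there is some $N\in[M]^{<\kappa}$ such that $a_m\in B\iff a_{m'}\in B$ whenever $m,m'$ realize the same $\cL$-type over $N$ in $M$. Put $V_N=\operatorname{span}(N)$, a subspace of dimension $<\kappa$. In a vector space the $1$-types over $V_N$ realized in $M$ are ``$x=v$'' for $v\in V_N$ together with the single ``generic'' type of an element outside $V_N$; hence either (i) $a_m\in B$ for every $m\in M\setminus V_N$, or (ii) $a_m\notin B$ for every $m\in M\setminus V_N$. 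In case (i), for any $a_{m_0}\in A$ with $m_0\in V_N$ choose $v\in M\setminus V_N$ (possible since $\dim V_N<\dim M$); then $m_0+v,v\notin V_N$, so $a_{m_0+v},a_v\in B$, and as $B$ is a subspace $a_{m_0}=a_{m_0+v}-a_v\in B$, forcing $B=A$ and contradicting properness. So case (ii) holds, i.e.\ $B\subseteq A_N$. Now $A_N$ is the image under a surjection lying in $W$ of the set of finitely supported functions $N\to\mathbb F$, which is well-orderable in $W$ because $\mathbb F\in V$; hence $A_N$, and therefore its subset $B$, is well-orderable in $W$, so $B$ has a basis there. That basis is still linearly independent and spanning in $V[G]$, so its cardinality equals $\dim^{V[G]}B\le\dim^{V[G]}V_N<\kappa$. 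Thus $B$ has dimension $<\kappa$ in $W$, completing the argument.

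The part I expect to be the main obstacle is the last paragraph: extracting from ``$\kappa$-supported'' the precise statement that $B$ is type-invariant over a small parameter set, and then producing a basis for $B$ inside $W$ --- which relies both on the relevant small subspace being well-orderable in $W$ (ultimately because the field comes from the ground model) and on the absoluteness of linear independence and spanning between $W$ and $V[G]$. The other ingredients --- homogeneity of $M$, absoluteness of spans, and the analysis of case (i) --- are routine.
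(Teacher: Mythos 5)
Your proof is correct and is exactly the intended application of the generic structures theorem: the paper gives no argument for this corollary (it only cites \cite{Karagila:2012}), and your L\"auchli-style analysis---there is a single nontrivial $1$-type over $\operatorname{span}(N)$, so a proper $\kappa$-supported subspace must be contained in the well-orderable set $A_N$ and hence have a basis of size ${<}\kappa$ in $W$---is the standard way to derive it from the theorem. No gaps; the only implicit ingredient is that $\DC_{<\kappa}$ itself holds in $W$, which the theorem's proof already notes via \autoref{lemma:closure}.
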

Taking a countable field and $\kappa=\omega_1$ we obtain the following corollary.
\begin{corollary}
It is consistent with $\DC$ that there is an uncountable Abelian group such that all of its proper subgroups are countable.
\end{corollary}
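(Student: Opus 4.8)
The plan is to obtain this directly from the preceding corollary by specializing the parameters. Take $\kappa=\omega_1$ and note that $\DC_{<\omega_1}$ is nothing other than $\DC$ (for finite $n$, $\DC_n$ is a theorem of $\ZF$, and $\DC_\omega=\DC$). For the ground field I would pick a prime field $\mathbb{F}_p$; choosing a field of positive characteristic is the one genuinely load-bearing decision. The corollary then gives a model $W\models\ZF+\DC$ containing an $\mathbb{F}_p$-vector space $V$ that is not generated by countably many vectors, but each of whose proper subspaces has dimension ${<}\omega_1$.

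Next I would check that the additive group $(V,+)$ has the desired properties. It is uncountable: a countable set generates itself, so a countable $V$ would be generated by countably many vectors, contrary to the choice of $V$. Since $V$ is a vector space over $\mathbb{F}_p=\mathbb{Z}/p\mathbb{Z}$, multiplication by a scalar $c\in\mathbb{F}_p$ is just $c$-fold addition, so every additive subgroup of $V$ is automatically closed under scalars, i.e.\ is an $\mathbb{F}_p$-subspace; hence the proper subgroups of $(V,+)$ are precisely the proper subspaces of $V$. Finally, a proper subspace has dimension ${<}\omega_1$, hence is spanned by a countable set of vectors over the countable field $\mathbb{F}_p$, and is therefore countable (using that $\DC$, hence $\AC_\omega$, holds in $W$, or more directly that the finite $\mathbb{F}_p$-combinations of a well-ordered countable set form a countable set in $\ZF$). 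Thus $(V,+)$ is an uncountable abelian group all of whose proper subgroups are countable, and it lives in a model of $\ZF+\DC$.

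I do not expect a real obstacle here: all the work is already done in the previous corollary (and ultimately in the theorem on $\kappa$-homogeneous structures, applied to a countable $\mathbb{F}_p$-vector space of dimension $\omega$). The only subtleties to flag are the identification $\DC_{<\omega_1}=\DC$ and the fact that the proper subgroups of $(V,+)$ coincide with the proper subspaces of $V$; the latter is exactly why one must work over $\mathbb{F}_p$ rather than, say, $\mathbb{Q}$, since an additive subgroup of a $\mathbb{Q}$-vector space need not be a subspace (e.g.\ $\mathbb{Z}\le\mathbb{Q}$), and the reduction from subgroups to subspaces would then break.
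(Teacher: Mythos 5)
Your proof is correct and is essentially the paper's own argument: the paper derives this corollary from the preceding one simply by ``taking a countable field and $\kappa=\omega_1$''. Your one genuine addition is the observation that the countable field should be a prime field $\mathbb{F}_p$ so that additive subgroups coincide with subspaces --- a point the paper glosses over and which is indeed needed, since over $\QQ$ the reduction from subgroups to subspaces would fail.
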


\begin{remark}The reason we used $\Add(\kappa,M\times\kappa)$ and not $\Add(\kappa,M)$ is that we needed to create a better set-theoretic indiscernibility between the $a_m$'s. If one repeats the proof using only $\Add(\kappa,M)$, then one discovers that sets such as $\{a_m\mid 0\in a_m\}$ enter the model, and they have nothing to do with being supported. However, doing that does offer one advantage of obtaining failures as subsets of the reals. So for example, one could $\Add(\omega,M)$ or use a countable support product of Sacks reals, and obtain the generic structure as a structure on a set of reals. 
\end{remark}
\providecommand{\bysame}{\leavevmode\hbox to3em{\hrulefill}\thinspace}
\providecommand{\MR}{\relax\ifhmode\unskip\space\fi MR }
\providecommand{\MRhref}[2]{%
  \href{http://www.ams.org/mathscinet-getitem?mr=#1}{#2}
}
\providecommand{\href}[2]{#2}

\end{document}